\numberwithin{equation}{section}
\theoremstyle{plain}
\newtheorem{theorem}{Theorem}[section]
\newtheorem{proposition}[theorem]{Proposition}
\newtheorem{lemma}[theorem]{Lemma}
\newtheorem{corollary}[theorem]{Corollary}
\theoremstyle{definition}
\newtheorem{definition}[theorem]{Definition}
\newtheorem{example}[theorem]{Example}
\theoremstyle{remark}
\def\subsection{\@startsection{subsection}{2}%
  \z@{1.0\linespacing \@plus .5\linespacing}{.5\linespacing}%
  {\normalfont\bfseries}}
\title[Generalized Divisors on DMH Stack]{Generalized Divisors on DMH Stack}
\author[Minghua Dou]{Minghua Dou}
\address{The University of Manchester, Manchester, UK}
\email{minghua.dou@postgrad.manchester.ac.uk}
\keywords{DMH stack; divisor; generalized divisor; reflexive module; total quotient sheaf; fractional ideal}
\date{\today} 
\begin{document}

\begin{abstract}
Hartshorne developed a theory of generalized divisors on Gorenstein schemes to characterize codimension-one closed subschemes without embedded points. Generalized divisors can be viewed as a generalization of Weil divisors to non-normal schemes. The purpose of this paper is to extend generalized divisors on schemes to DMH stacks, where DMH stacks are Deligne-Mumford stacks satisfying specific conditions. We provide a detailed proof of the properties of total quotient sheaves under étale morphisms, thereby demonstrating the difficulty of directly defining generalized divisors on DMH stacks through fractional ideals of the total quotient sheaf. Instead, we propose to define generalized divisors on DMH stacks using reflexive coherent sheaves that are locally free of rank one at generic points. Furthermore, we rigorously establish the rationality of this definition on stacks.
\end{abstract}

\maketitle
\tableofcontents

\section{Introduction}
Divisors are fundamental objects of study in algebraic geometry, with their origins traceable to the treatment of zeros and poles of meromorphic functions in complex analysis. On a Riemann surface, each zero and pole of a meromorphic function carries a multiplicity, and the function is formalized by the integral linear combination of these points, called the principal divisor. For example, the divisor of a function \(f\) is denoted
\[
\operatorname{div}(f)=\sum n_i P_i,
\]
where \(n_i\) is the multiplicity of the point \(P_i\) (positive for zeros and negative for poles). In algebraic number theory, every nonzero ideal of a Dedekind domain factors uniquely as a product of prime ideals; this factorization reflects the valuation theory of local rings, with each prime divisor corresponding to a discrete valuation. This idea generalizes to algebraic geometry: a divisor is viewed as a formal linear combination (with integer coefficients) of codimension-one subvarieties on an algebraic variety (e.g., points on a curve, curves on a surface).

In the early developments of the nineteenth century, divisors on smooth algebraic varieties were used to encode rational functions. With the rise of modern algebraic geometry, the notion of divisor was extended to schemes as Weil divisors and Cartier divisors.

In \cite{har3} (1986), Hartshorne introduced a theory of generalized divisors on plane Gorenstein curves and, on that basis, proved a generalized Noether theorem; subsequently, in \cite{har2} he further extended generalized divisors to schemes. While the definition of Weil divisors typically requires the underlying scheme to be normal, generalized divisors can be defined on non-normal schemes satisfying the conditions \(G_{1}+S_{2}\); thus one may regard generalized divisors as a genuine extension of Weil divisors. Generalized divisors enjoy several special features: for instance, every codimension-one subscheme without embedded points can be viewed as an effective generalized divisor, and standard tools from the classical theory—such as linear systems—remain available in this framework (linear systems of divisors are used to construct morphisms from varieties to projective space, and the abundance of divisors is a central instrument in the Minimal Model Program for studying birational equivalence). As an application, Hartshorne recast liaison theory using generalized divisors and gave a new definition of linkage. The applications themselves, however, are not the focus of the present paper. Our principal aim is to address how generalized divisors should be reasonably extended to stacks.\\

The primary motivation for introducing stacks comes from moduli theory. In algebraic geometry, the collection (or groupoid) of all geometric objects of a fixed type (e.g., smooth projective curves of prescribed genus, or linear subspaces of fixed dimension in a given vector space) often carries additional geometric structure—such as that of a variety, a scheme, an algebraic space, or an algebraic stack. Roughly speaking, a space \(M\) of this sort is the moduli space classifying objects of the given type; in a suitable sense, the study of all such objects can be reduced to the study of the geometry of \(M\). More precisely, a moduli problem can be formulated as a functor
\[
F\colon (\mathrm{Sch}/S)\longrightarrow (\mathrm{Sets}/\mathrm{Groupoids}), \qquad
T \longmapsto \text{ a family of objects over } T,
\]
where \(\mathrm{Sch}/S\) denotes the category of schemes over \(S\).

In many situations, the objects being classified admit nontrivial automorphisms (for instance, an isosceles triangle possesses a reflection symmetry along its median), which prevents the moduli functor from taking values in sets and forces it to land in groupoids or even 2-categories. Consequently, such a functor cannot be represented by a scheme. Through substantial efforts, Grothendieck, Deligne–Mumford, and M.~Artin (among others) broadened the notions of sheaf and scheme to develop a comprehensive theory of algebraic stacks.

To investigate the geometry of moduli stacks, a natural idea is to transplant divisor theory from schemes to stacks, so that properties of divisors on a stack reflect the geometry of the stack itself. There has been significant work in this direction. For example, Vistoli \cite{Vis} gave a detailed account of intersection theory for divisors on algebraic stacks, generalizing Fulton's intersection theory. However, the divisors considered by Vistoli, Alper, and others are often cycles defined on integral stacks (analogous to the extension of Weil divisors from schemes), and such divisors are of limited use in the non-integral setting.\\

The goal of this paper is to extend the theory of generalized divisors to more general stacks. We assume our stacks satisfy the conditions \(G_{1}+S_{2}\) (we refer to such stacks as \emph{DMH stacks}), but we do not require them to be integral or normal. We first discuss in detail the obstructions encountered when attempting to define generalized divisors on stacks via fractional ideals inside the total quotient sheaf. We then give a correct definition of generalized divisors on stacks in terms of reflexive coherent sheaves. Under this definition, a generalized divisor embeds locally into the total quotient sheaf on every \'etale chart; moreover, the coherence of generalized divisors on stacks allows us to develop a theory of linear systems on stacks (see Proposition \ref{linear system}).

Section~2 reviews the basic notions of sheaves (coherent sheaves) on stacks and of reflexive modules; in \S2.3 we define DMH stacks and explain how to construct them. In Section~3 we study properties of the total quotient sheaf on DMH (or DM) stacks and give the definition of generalized divisors on DMH stacks.

Although this paper provides a preliminary definition of generalized divisors on stacks, several challenging questions remain open: how to define the sum of generalized divisors, how to define the degree of a generalized divisor in a satisfactory way (note that for an \'etale morphism of schemes \(f\colon U\to V\), the projectivity of \(U\) is not equivalent to the projectivity of \(V\), so one cannot directly extend the approach of [Har. Prop.~2.16]\cite{har2} to define degrees), and whether the set of generalized Cartier divisors carries a natural group structure. Further work is therefore required to complete the theory of generalized divisors on DMH stacks.

\section{DMH Stack and Reflexive $\mathcal{O}_{\mathcal {X }}$ Modules}

Hartshorne's initial motivation for introducing \emph{generalized divisors} was to address the classification problem for algebraic curves in projective space. A natural class of objects to consider is the collection of one-dimensional closed subschemes without embedded points, possibly lying on singular or reducible surfaces. However, effective Cartier divisors—defined on arbitrary schemes—only capture codimension-one subschemes that are locally principal (locally cut out by a single equation). Hartshorne therefore sought a framework in which every one-dimensional closed subscheme without embedded points can be regarded as a divisor.

In Hartshorne's work, the schemes under consideration are not required to be irreducible or reduced; instead they are assumed to satisfy the conditions \(G_{1}\) and \(S_{2}\) (with the \(S_{2}\) condition viewed as a relaxation/generalization related to normality). Moreover, under the \(S_{1}\) condition the total quotient sheaf is quasi-coherent, which is technically convenient for computations. Consequently, in order to discuss generalized divisors on a Deligne–Mumford stack \(\mathcal{X}\), we require that for every étale \(\mathcal{X}\)-scheme \(U \to \mathcal{X}\), the scheme \(U\) satisfies the ``\(G_{1}+S_{2}\)" conditions. We call this the \emph{Hartshorne condition}; a DM stack satisfying it will be referred to as a \emph{DMH stack}.

\subsection{Sheaves and \texorpdfstring{\(\mathcal{O}_{\mathcal X}\)}{O\_X}-Modules}

We begin by recalling some basic notions in algebraic geometry; refer [Har.GTM52] \cite{har1}.

For a topological space \(X\), a presheaf \(\mathcal{F}\) on \(X\) may be viewed as a contravariant functor from the category of open subsets of \(X\) to the category of abelian groups. A presheaf that satisfies the gluing and locality axioms is called a \emph{sheaf}. A standard example is the sheaf of holomorphic functions \(\mathcal{O}_X\) on a complex manifold \(X\): for each open set \(U\subseteq X\), \(\mathcal{O}_X(U)\) is the abelian group of holomorphic functions on \(U\).

A \emph{locally ringed space} \((X,\mathcal{O}_X)\) consists of a topological space \(X\) together with a sheaf of rings \(\mathcal{O}_X\) such that each stalk \(\mathcal{O}_{X,x}\) is a local ring. If for every point \(x\in X\), there exists an affine open neighborhood, then \((X,\mathcal{O}_X)\) is called a \emph{scheme}. Just as manifolds are obtained by gluing open subsets of Euclidean space, schemes may be viewed as obtained by gluing affine schemes.\\

Informally, a \emph{prestack} is a functor \(\mathcal{F}\colon \mathcal{C}\to \mathcal{D}\) (e.g. sending a smooth projective curve \(T\) to a family of vector bundles over \(T\)); in this sense a prestack generalizes the notion of a presheaf. A prestack satisfying descent is called a \emph{stack}. In complex geometry one has analytic stacks for moduli of analytic objects (complex manifolds, holomorphic bundles) on the analytic site; in algebraic geometry we work with algebraic stacks for moduli over schemes or varieties on the étale/smooth site. By GAGA, projective analytic stacks are related to algebraic stacks.

A Deligne–Mumford (DM) stack \(\mathcal{X}\) is a particular kind of algebraic stack admitting an étale presentation: there exists a scheme \(U\) and an étale surjection \(U \to \mathcal{X}\); locally, a DM stack is equivalent to a quotient stack \([U/G]\) by the action of a finite group.

We now recall the definitions of sheaves and \(\mathcal{O}_{\mathcal X}\)-modules on a DM stack \(\mathcal{X}\); refer[Alper]\cite{Alper}.\\

\begin{definition}\label{def:small etale site}
The \emph{small étale site} of a Deligne–Mumford stack \(\mathcal{X}\), denoted \(\mathcal{X}_{\text{ét}}\), is the category whose objects are étale \(\mathcal{X}\)-schemes (often written \(U\to \mathcal{X}\)), and in which a covering of \(U\) is a family of étale morphisms \(\{U_i \to U\}\) such that \(\coprod_i U_i \to U\) is surjective. Throughout, we will work only with étale coverings, which we simply call coverings.
\end{definition}

\begin{definition}[Sheaves on a site]
Let \(\mathcal{S}\) be a site. A \emph{sheaf} on \(\mathcal{S}\) is a presheaf \(F\colon \mathcal{S}\to \mathrm{Sets}\) such that for every \(S\in \mathcal{S}\) and every covering \(\{S_i \to S\}\in \operatorname{Cov}(S)\), the sequence
\[
F(S)\longrightarrow \prod_i F(S_i) \rightrightarrows \prod_{i,j} F(S_i\times_S S_j)
\]
is exact, where the two arrows are induced by the projections \(S_i\times_S S_j \to S_i\) and \(S_i\times_S S_j \to S_j\).
\end{definition}

Thus we may consider sheaves of abelian groups on \(\mathcal{X}_{\text{ét}}\); write \(\mathrm{Ab}(\mathcal{X}_{\text{ét}})\) for their category. For a sheaf of abelian groups \(F\) and an étale \(\mathcal{X}\)-scheme \(U\), we denote the sections by \(F(U\to \mathcal{X})\).

\begin{example}[Structure sheaf]
The structure sheaf \(\mathcal{O}_{\mathcal{X}}\) is defined by
\(\mathcal{O}_{\mathcal{X}}(U \to \mathcal{X})=\Gamma(U,\mathcal{O}_U)\) for each étale \(\mathcal{X}\)-scheme \(U\).\\
\end{example}

\begin{definition}[\(\mathcal{O}_{\mathcal X}\)-modules]
If \(\mathcal{X}\) is a DM stack, an \(\mathcal{O}_{\mathcal{X}}\)-module is a sheaf \(F\) on \(\mathcal{X}_{\text{ét}}\) such that, for every étale \(\mathcal{X}\)-scheme \(U\), the set \(F(U\to \mathcal{X})\) is a module over \(\mathcal{O}_{\mathcal{X}}(U\to \mathcal{X})=\Gamma(U,\mathcal{O}_U)\), with the module structures compatible with pullback along morphisms \(V\to U\) of étale \(\mathcal{X}\)-schemes.
\end{definition}

We write \(\operatorname{Mod}(\mathcal{O}_{\mathcal{X}})\) for the category of \(\mathcal{O}_{\mathcal{X}}\)-modules. Given \(\mathcal{O}_{\mathcal{X}}\)-modules \(F\) and \(G\), their tensor product is the sheafification of
\[
(U\to \mathcal{X}) \longmapsto F(U\to \mathcal{X}) \otimes_{\mathcal{O}_{\mathcal{X}}(U\to \mathcal{X})} G(U\to \mathcal{X}),
\]
and is denoted \(F\otimes G := F\otimes_{\mathcal{O}_{\mathcal{X}}} G\). The internal Hom sheaf \(\mathscr{H}\!om_{\mathcal{O}_{\mathcal{X}}}(F,G)\) has sections over an étale morphism \(f\colon U\to \mathcal{X}\) given by
\[
\mathscr{H}\!om_{\mathcal{O}_{\mathcal{X}}}(F,G)(U\to \mathcal{X})
= \operatorname{Hom}_{\mathcal{O}_U}\bigl(F|_{U},\, G|_{U}\bigr),
\]
where \(F|_{U}:=f^{-1}F\) denotes the pullback (restriction) to \(U_{\text{ét}}\).

Let \(F\) be an \(\mathcal{O}_{\mathcal{X}}\)-module. For an étale \(\mathcal{X}\)-scheme \(U\), we write \(F|_{U_{\text{ét}}}\) for the restriction to the small étale site of \(U\), and \(F|_{U_{\text{Zar}}}\) for the restriction to the Zariski site of \(U\). When \(X\) is a scheme, the notation \(\mathcal{O}_X\) may refer to the structure sheaf on either \(X_{\text{ét}}\) or \(X_{\text{Zar}}\); to avoid ambiguity we write \(\mathcal{O}_{X_{\text{ét}}}\) and \(\mathcal{O}_{X_{\text{Zar}}}\), respectively.\\

\begin{definition}\label{def:quasi-coherent}[Quasi-coherent and coherent sheaves]
Let \(\mathcal{X}\) be a DM stack. An \(\mathcal{O}_{\mathcal{X}}\)-module \(F\) is \emph{quasi-coherent} if:
\begin{enumerate}
\item For every étale \(\mathcal{X}\)-scheme \(U\), the restriction \(F|_{U_{\text{Zar}}}\) is a quasi-coherent \(\mathcal{O}_{U_{\text{Zar}}}\)-module.
\item For every étale morphism \(f\colon U\to V\) of étale \(\mathcal{X}\)-schemes, the natural map
\[
f^{*}\!\bigl(F|_{V_{\text{Zar}}}\bigr)\;\xrightarrow{\;\sim\;}\; F|_{U_{\text{Zar}}}
\]
is an isomorphism.
\end{enumerate}
A quasi-coherent \(\mathcal{O}_{\mathcal{X}}\)-module \(F\) is a \emph{vector bundle} (resp. a vector bundle of rank \(r\)) if, for every étale \(U\to \mathcal{X}\), the sheaf \(F|_{U_{\text{Zar}}}\) is a vector bundle ((resp. a vector bundle of rank \(r\)) on \(U\). If \(\mathcal{X}\) is locally noetherian, we call \(F\) \emph{coherent} if \(F|_{U_{\text{Zar}}}\) is coherent for all étale \(U\to \mathcal{X}\).
\end{definition}

We denote by \(\mathrm{QCoh}(\mathcal{X})\) the category of quasi-coherent sheaves on \(\mathcal{X}\), and in the noetherian setting by \(\operatorname{Coh}(\mathcal{X})\) the category of coherent sheaves.

\begin{example}
The structure sheaf \(\mathcal{O}_{\mathcal{X}}\) is quasi-coherent; it is a vector bundle of rank $1$, and it is coherent when \(\mathcal{X}\) is locally noetherian(such vector bundles are called line bundles).
\end{example}

\begin{example}
If \(\mathcal{X}\) is a DM stack over a scheme \(S\), the sheaf of relative differentials \(\Omega_{\mathcal{X}/S}\) is quasi-coherent, since for every étale morphism \(f\colon U\to V\) of étale \(\mathcal{X}\)-schemes the canonical map \(f^{*}\Omega_{V/S} \to \Omega_{U/S}\) is an isomorphism. When \(\mathcal{X}\to S\) is smooth, \(\Omega_{\mathcal{X}/S}\) is a vector bundle.
\end{example}

\begin{definition}\label{def:divisors on DM stacks}[Divisors on DM stacks]
A closed substack \(\mathcal{Z}\subseteq \mathcal{X}\) of a DM stack \(\mathcal{X}\) is called a \emph{divisor} if there exists an étale presentation \(U\to \mathcal{X}\) such that the pullback \(\mathcal{Z}\times_{\mathcal{X}} U \subseteq U\) is a divisor on the scheme \(U\).
\end{definition}

\subsection{Reflexive Modules and Sheaves}

In this section, we recall the definition and basic properties of reflexive modules (and sheaves). In later sections, generalized divisors on stacks will be defined in terms of reflexive sheaves on stacks. We also record the Gorenstein and Serre conditions on Noetherian schemes. The definitions in this subsection follow [Har]\cite{har2}.\\

\begin{definition}\label{def:reflexive modules}
Let \(A\) be a ring and \(M\) an \(A\)-module. Its dual is \(M^{\vee}=\operatorname{Hom}_{A}(M,A)\), and there is a natural map \(\alpha\colon M\to M^{\vee\vee}\). If \(\alpha\) is an isomorphism, then \(M\) is called a \emph{reflexive} \(A\)-module.
\end{definition}

Similarly, if \(\mathcal{F}\) is an \(\mathcal{O}_{X}\)-module (sheaf) on a scheme \(X\), its dual is \(\mathcal{F}^{\vee}=\mathcal{H}om\!\left(\mathcal{F},\mathcal{O}_{X}\right)\), and there is a natural map \(\alpha\colon \mathcal{F}\to \mathcal{F}^{\vee\vee}\). If \(\alpha\) is an isomorphism, then \(\mathcal{F}\) is called a \emph{reflexive} \(\mathcal{O}_{X}\)-module.

From the definitions it follows that if \(\mathcal{F}\) is a coherent sheaf on a Noetherian scheme \(X\), then \(\mathcal{F}\) is reflexive if and only if, for every affine open \(U=\operatorname{Spec}A\), the module \(M_{U}=\Gamma(U,\mathcal{F})\) is a reflexive \(A\)-module.

\medskip

Let \((A,\mathfrak{m})\) be an \(n\)-dimensional Noetherian local ring with residue field \(k=A/\mathfrak{m}\). We say that \(A\) is \emph{Gorenstein} if
\[
\operatorname{Ext}^{i}_{A}(k,A)=
\begin{cases}
0, & \text{for } i<n,\\
k, & \text{for } i=n.
\end{cases}
\]
See [Matsumura]\cite[§18]{Mat} for further examples of Gorenstein rings. In particular, local complete intersection rings are Gorenstein, and Gorenstein local rings are Cohen–Macaulay.

A (not necessarily local) Noetherian ring is called Gorenstein if all of its localizations are Gorenstein. A Noetherian scheme \(X\) is called Gorenstein if all of its local rings are Gorenstein.\\

\begin{definition}
A Noetherian ring \(A\) (equivalently, a Noetherian scheme \(X\)) is said to satisfy \(G_{r}\) (``Gorenstein in codimension \(\le r\)'') if, for every localization \(A_{\mathfrak{p}}\) with \(\dim A_{\mathfrak{p}}\le r\) (equivalently, for every local ring \(\mathcal{O}_{X,x}\) with \(\dim \mathcal{O}_{X,x}\le r\)), the ring \(A_{\mathfrak{p}}\) is Gorenstein.
\end{definition}

\begin{definition}
Let \(A\) be a Noetherian ring and \(M\) a finitely generated \(A\)-module (equivalently, let \(X\) be a Noetherian scheme and \(\mathcal{F}\) a coherent sheaf on \(X\)). We say that \(M\) (resp. \(\mathcal{F}\)) satisfies Serre's condition \(S_{r}\) if, for every prime ideal \(\mathfrak{p}\subseteq A\),
\[
\operatorname{depth} M_{\mathfrak{p}} \;\ge\; \min\!\bigl(r,\dim A_{\mathfrak{p}}\bigr)
\quad
\text{(equivalently, for every point \(x\in X\), }
\operatorname{depth} \mathcal{F}_{x} \ge \min\!\bigl(r,\dim \mathcal{O}_{X,x}\bigr)\text{).}
\]
We say that \(A\) (resp. \(X\)) satisfies \(S_{r}\) if \(M\) (resp. \(\mathcal{F}\)) satisfies \(S_{r}\).\\
\end{definition}

\begin{lemma}\label{lem:reflexive exact sequence}
Let \(A\) be a Noetherian ring satisfying \(G_{0}+S_{1}\). A finitely generated \(A\)-module \(M\) is reflexive if and only if there exists a short exact sequence
\[
0 \longrightarrow M \longrightarrow L \longrightarrow N \longrightarrow 0
\]
with \(L\) free and \(N\) a submodule of a free module.
\end{lemma}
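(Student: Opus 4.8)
The plan is to prove the two implications separately. The forward direction---that reflexivity of $M$ implies the existence of such a sequence---should require no hypotheses on $A$ at all: since $\alpha\colon M\to M^{\vee\vee}$ is an isomorphism and $M^{\vee}$ is finitely generated over the Noetherian ring $A$, I would pick a finite free presentation $F_1\xrightarrow{\phi}F_0\to M^{\vee}\to 0$, apply the left-exact functor $\operatorname{Hom}_A(-,A)$ to obtain $0\to M^{\vee\vee}\to F_0^{\vee}\xrightarrow{\phi^{\vee}}F_1^{\vee}$, and then rewrite this via $\alpha$ as $0\to M\to L\to N\to 0$ with $L:=F_0^{\vee}$ free and $N:=\operatorname{im}(\phi^{\vee})$ a submodule of the free module $F_1^{\vee}$. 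The conditions $G_0+S_1$ will be needed only for the converse.

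For the converse, suppose we are given $0\to M\xrightarrow{i}L\to N\to 0$ with $L$ free and $N$ a submodule of a free module $L'$. That $\alpha_M$ is injective is immediate, since $M$ embeds in the free module $L$ and the coordinate functionals of such an embedding already separate points; so the whole problem reduces to showing $Q:=\operatorname{coker}(\alpha_M)=0$. I would do this in two steps: first realize $M^{\vee\vee}$ as a submodule of $L$ containing $M$, and then control the associated primes of $Q$.

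For the first step, dualizing the given sequence produces a long exact sequence with $\operatorname{coker}(i^{\vee})\cong\operatorname{Ext}^1_A(N,A)$ (using $\operatorname{Ext}^1_A(L,A)=0$, as $L$ is free), and from this one reads off $\ker(i^{\vee\vee})\cong\operatorname{Hom}_A(\operatorname{Ext}^1_A(N,A),A)$. The crux is that this Hom group vanishes, and this is exactly where both hypotheses are used together: by $G_0$, each $A_{\mathfrak{p}}$ with $\mathfrak{p}$ minimal is Artinian Gorenstein, hence self-injective, so $\operatorname{Ext}^1_A(N,A)_{\mathfrak{p}}=\operatorname{Ext}^1_{A_{\mathfrak{p}}}(N_{\mathfrak{p}},A_{\mathfrak{p}})=0$ and $\operatorname{Ext}^1_A(N,A)$ is supported off $\operatorname{Min}(A)$; while by $S_1$ one has $\operatorname{Ass}(A)=\operatorname{Min}(A)$, so a nonzero homomorphism $\operatorname{Ext}^1_A(N,A)\to A$ would force an associated prime of $A$ to lie in the support of $\operatorname{Ext}^1_A(N,A)$, a contradiction. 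Hence $i^{\vee\vee}\colon M^{\vee\vee}\to L^{\vee\vee}$ is injective; identifying $L^{\vee\vee}$ with $L$ via the isomorphism $\alpha_L$, naturality of $\alpha$ gives $i^{\vee\vee}\circ\alpha_M=i$, so that $M\subseteq M^{\vee\vee}\subseteq L$ and $Q$ embeds into $L/M\cong N$.

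For the second step, I would note $\operatorname{Ass}(Q)\subseteq\operatorname{Ass}(N)\subseteq\operatorname{Ass}(L')=\operatorname{Ass}(A)=\operatorname{Min}(A)$, again using $N\subseteq L'$ and $S_1$; on the other hand, over the Artinian Gorenstein ring $A_{\mathfrak{p}}$ with $\mathfrak{p}$ minimal, every finitely generated module is reflexive --- this is Matlis duality, $A_{\mathfrak{p}}$ being the injective hull of its residue field --- so $(\alpha_M)_{\mathfrak{p}}=\alpha_{M_{\mathfrak{p}}}$ is an isomorphism and $Q_{\mathfrak{p}}=0$, whence $\operatorname{Supp}(Q)$ contains no minimal prime. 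These two facts together force $\operatorname{Ass}(Q)=\emptyset$, hence $Q=0$ and $M$ is reflexive. I expect the one genuinely delicate point to be the vanishing of $\operatorname{Hom}_A(\operatorname{Ext}^1_A(N,A),A)$ --- equivalently, the step producing the inclusion $M^{\vee\vee}\hookrightarrow L$ --- since that is precisely where $G_0$ and $S_1$ must be combined; the reflexivity statement over Artinian Gorenstein local rings is a standard input, and everything else is routine bookkeeping with localization, depth, and associated primes.
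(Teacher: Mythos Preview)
Your argument is correct in both directions. The forward direction is the standard ``dualize a free presentation of $M^{\vee}$'' trick and indeed needs only that $A$ is Noetherian; for the converse, your identification $\ker(i^{\vee\vee})\cong\Hom_A(\Ext^1_A(N,A),A)$ is accurate, the vanishing argument combining $G_0$ (self-injectivity of $A_{\mathfrak p}$ at minimal $\mathfrak p$) with $S_1$ ($\operatorname{Ass}(A)=\operatorname{Min}(A)$) is exactly the right mechanism, and the final step killing $Q$ via $\operatorname{Ass}(Q)\subseteq\operatorname{Min}(A)$ together with Matlis duality over the Artinian Gorenstein localizations is clean and correct. The only implicit facts you are using---that $\Hom_A(M,A)$ and $\Ext^1_A(N,A)$ commute with localization for finitely generated modules over a Noetherian ring, and that the biduality map over a zero-dimensional Gorenstein local ring is the Matlis isomorphism---are standard.

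As for comparison: the paper does not give its own proof of this lemma but simply cites Hartshorne \cite[Prop.~1.7]{har2}. Your write-up is essentially a self-contained reconstruction of that result, and the strategy you outline (reduce surjectivity of $\alpha_M$ to the vanishing of a quotient whose support and associated primes are both controlled by $G_0$ and $S_1$) is the natural one; there is nothing to correct.
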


\noindent \emph{Proof.} See [Har]\cite[Prop.~1.7]{har2}.

\subsection{DMH Stacks}
In this section, we introduce the definition of DMH stacks, whose essence is to impose the conditions \(G_{1}\) and \(S_{2}\) on Deligne–Mumford stacks. These are the central objects of study in this paper.\\

\begin{theorem}\label{etale local properties for Gr and Sr}
\textbf{(\(G_{r}\) and \(S_{r}\) are \'etale local properties)} Let \(U \to V\) be an \'etale surjection between Noetherian schemes. Then \(U\) satisfies \(G_{r}\) and \(S_{r}\) if and only if \(V\) satisfies \(G_{r}\) and \(S_{r}\).
\end{theorem}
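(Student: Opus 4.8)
The plan is to reduce the assertion to a comparison of local rings at pairs of points $u\in U$ and $v=f(u)\in V$. Recall that an \'etale morphism $f\colon U\to V$ is flat and unramified, so for each such pair the induced homomorphism $\mathcal{O}_{V,v}\to\mathcal{O}_{U,u}$ is a flat local homomorphism of Noetherian local rings with $\mathfrak{m}_v\mathcal{O}_{U,u}=\mathfrak{m}_u$, and the residue extension $\kappa(v)\hookrightarrow\kappa(u)$ is finite and separable. In particular the closed fibre $\mathcal{O}_{U,u}/\mathfrak{m}_v\mathcal{O}_{U,u}=\kappa(u)$ is a field, hence of dimension and depth zero and (trivially) Gorenstein. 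By the standard dimension and depth formulas for flat local homomorphisms (see \cite{Mat}) one obtains
\[
\dim\mathcal{O}_{U,u}=\dim\mathcal{O}_{V,v},\qquad
\operatorname{depth}\mathcal{O}_{U,u}=\operatorname{depth}\mathcal{O}_{V,v},
\]
and by the ascent–descent theorem for the Gorenstein property along a flat local homomorphism (see \cite[\S18]{Mat}), $\mathcal{O}_{U,u}$ is Gorenstein if and only if $\mathcal{O}_{V,v}$ is Gorenstein.

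Granting these three equivalences, the theorem is a routine verification. Suppose first that $V$ satisfies $G_r+S_r$, and let $u\in U$ with $v=f(u)$. If $\dim\mathcal{O}_{U,u}\le r$, then $\dim\mathcal{O}_{V,v}\le r$, so $\mathcal{O}_{V,v}$ is Gorenstein, hence $\mathcal{O}_{U,u}$ is Gorenstein; thus $U$ satisfies $G_r$. For $S_r$,
\[
\operatorname{depth}\mathcal{O}_{U,u}=\operatorname{depth}\mathcal{O}_{V,v}\ \ge\ \min\bigl(r,\dim\mathcal{O}_{V,v}\bigr)=\min\bigl(r,\dim\mathcal{O}_{U,u}\bigr),
\]
so $U$ satisfies $S_r$. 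Conversely, suppose $U$ satisfies $G_r+S_r$; since $f$ is surjective, every $v\in V$ equals $f(u)$ for some $u\in U$, and running the same two arguments in the opposite direction transfers $G_r$ and $S_r$ from $U$ to $V$. Note that $U$ and $V$ are Noetherian by hypothesis, so the conditions $G_r$, $S_r$, and ``Gorenstein'' are all well posed.

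I expect the one genuinely non-formal ingredient to be the behaviour of the Gorenstein property under the flat local homomorphism $\mathcal{O}_{V,v}\to\mathcal{O}_{U,u}$: one must invoke that for a flat local homomorphism $A\to B$ of Noetherian local rings, $B$ is Gorenstein if and only if $A$ is Gorenstein and the closed fibre $B/\mathfrak{m}_A B$ is Gorenstein, and then observe that the \'etale hypothesis makes this closed fibre a finite separable field extension of $\kappa(v)$, which is automatically Gorenstein. Everything else — preservation of dimension and depth, the two chains of inequalities, and the single use of surjectivity for the implication from $U$ to $V$ — is bookkeeping.
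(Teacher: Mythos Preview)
Your proof is correct and follows essentially the same approach as the paper: both reduce to comparing the local rings $\mathcal{O}_{V,v}\to\mathcal{O}_{U,u}$ at pairs of points, invoke the dimension and depth formulas for flat local homomorphisms together with the fact that the \'etale hypothesis forces the closed fibre to be a finite separable field extension (hence zero-dimensional and Gorenstein), and appeal to the ascent--descent of the Gorenstein property along flat local maps. Your write-up is slightly more explicit in separating the two implications and in noting exactly where surjectivity is used, but the underlying argument is the same.
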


\begin{proof}
This is a local question, thus we may assume \(U=\operatorname{Spec} A\) and \(V=\operatorname{Spec} B\) are affine schemes; the \'etale surjection between them induces a natural \'etale ring homomorphism \(f\colon B \to A\). For any prime ideal \(p \subset B\), surjectivity on spectra yields a prime ideal \(q \subset A\) with \(p=f^{-1}(q)\). Thus we obtain a local \'etale homomorphism \(f_{p}\colon B_{p} \to A_{q}\).

By the dimension formula for flat local homomorphisms, we have
\[
\dim(A_{q}) \;=\; \dim(B_{p}) \;+\; \dim\bigl(A_{q}/m_{p}A_{q}\bigr).
\]
Since \'etale morphisms are unramified, the fiber \(A_{q}/m_{p}A_{q}\) is a finite separable extension of the residue field \(\kappa(m_{p})\), hence has Krull dimension \(0\). Therefore \(\dim(A_{q})=\dim(B_{p})\). By [SP,Tag0BJL], \(B_{p}\) is Gorenstein if and only if \(A_{q}\) is Gorenstein, which proves the \(G_{r}\) part.

For \(S_{r}\), by [SP, Tag0338] there is a depth formula
\[
\operatorname{depth}(A_{q}) \;=\; \operatorname{depth}(B_{p}) \;+\; \operatorname{depth}\bigl(A_{q}/m_{p}A_{q}\bigr).
\]
As above, \(A_{q}/m_{p}A_{q}\) is a finite separable extension of \(\kappa(m_{p})\), so \(\operatorname{depth}\bigl(A_{q}/m_{p}A_{q}\bigr)=0\). Hence \(\operatorname{depth}(A_{q})=\operatorname{depth}(B_{p})\), and the claim follows.\\
\end{proof}

From the proof we immediately obtain:

\begin{corollary}\label{cor:Gr and Sr}
Let \(U \to V\) be an \'etale morphism between Noetherian schemes (not necessarily surjective). If \(V\) satisfies \(G_{r}\) and \(S_{r}\), then \(U\) also satisfies \(G_{r}\) and \(S_{r}\).\\
\end{corollary}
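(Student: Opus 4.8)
The plan is to observe that the proof of Theorem~\ref{etale local properties for Gr and Sr} is entirely local on the source, and that the surjectivity of $U \to V$ was invoked there only in order to pass \emph{from} properties of $U$ \emph{to} properties of $V$—that is, to guarantee that every prime of the base is hit by some prime of the source. For the implication we want here, namely the descent of $G_r$ and $S_r$ from $V$ to $U$, no surjectivity hypothesis is needed, since every prime of the source automatically restricts to a prime of the base.

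Concretely, first I would reduce to the affine case: the assertion is local on $U$, so we may write $U = \Spec A$ and $V = \Spec B$ with $f\colon B \to A$ an étale ring homomorphism. Fix an arbitrary prime $q \subset A$ and set $p = f^{-1}(q)$; localizing, we obtain a local homomorphism $f_{p}\colon B_{p} \to A_{q}$, which is again étale, since flatness and unramifiedness are stable under localization and finite type is preserved.

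Next I would run the same two formulas as in the theorem. The dimension formula for flat local homomorphisms gives $\dim A_{q} = \dim B_{p} + \dim(A_{q}/m_{p}A_{q})$, and since $f_{p}$ is unramified the closed fiber $A_{q}/m_{p}A_{q}$ is a finite separable extension of $\kappa(m_{p})$, hence zero-dimensional; thus $\dim A_{q} = \dim B_{p}$. Consequently, if $\dim A_{q} \le r$ then $\dim B_{p} \le r$, so $B_{p}$ is Gorenstein by the $G_{r}$ hypothesis on $V$, and therefore $A_{q}$ is Gorenstein by [SP, Tag 0BJL]; this establishes $G_{r}$ for $U$. Similarly, the depth formula [SP, Tag 0338] yields $\operatorname{depth} A_{q} = \operatorname{depth} B_{p} + \operatorname{depth}(A_{q}/m_{p}A_{q}) = \operatorname{depth} B_{p}$, and since $V$ satisfies $S_{r}$ we have $\operatorname{depth} B_{p} \ge \min(r, \dim B_{p}) = \min(r, \dim A_{q})$, whence $\operatorname{depth} A_{q} \ge \min(r, \dim A_{q})$, which establishes $S_{r}$ for $U$. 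There is no genuine obstacle here: the corollary is essentially just a reading of the proof of Theorem~\ref{etale local properties for Gr and Sr}, the only point requiring (minimal) care being the automatic fact that $f^{-1}(q)$ is prime and that $B_{p} \to A_{q}$ remains étale, so that both the dimension and depth formulas apply verbatim.
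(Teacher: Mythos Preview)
Your proposal is correct and follows exactly the paper's intended approach: the paper simply states that the corollary is immediate from the proof of Theorem~\ref{etale local properties for Gr and Sr}, and you have correctly spelled out how, via the same dimension and depth formulas together with the observation that surjectivity was only used for the converse direction. One small inaccuracy worth noting: localization need not preserve finite type, so $B_{p}\to A_{q}$ is strictly speaking only a localization of an étale $B_{p}$-algebra rather than literally étale, but this is harmless since the cited tags require only flatness, and the fiber $A_{q}/m_{p}A_{q}$ is still a finite separable extension of $\kappa(m_{p})$ by the étaleness of the original map $B\to A$.
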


\begin{definition}
\textbf{(DMH stack)} A Deligne–Mumford stack \(\mathcal{X}\) is called a \emph{DMH stack}

(Deligne–Mumford–Hartshorne stack) if it admits an \'etale presentation \(U \to \mathcal{X}\) such that \(U\) is a Noetherian scheme satisfying \(G_{1}\) and \(S_{2}\).
\end{definition}

By Theorem \ref{etale local properties for Gr and Sr} the above definition is reasonable. Using Corollary \ref{cor:Gr and Sr} together with standard properties of DM stacks, we see that for every \'etale \(\mathcal{X}\)-scheme \(U\), the scheme \(U\) is Noetherian and satisfies \(G_{1}\) and \(S_{2}\). The definitions of sheaves on a DMH stack, \(\mathcal{O}_{\mathcal{X}}\)-modules, and (quasi-)coherent \(\mathcal{O}_{\mathcal{X}}\)-modules are the same as for DM stacks.\\

\begin{example}
Any Noetherian scheme \(X\) satisfying \(G_{1}\) and \(S_{2}\) is itself a DMH stack.
\end{example}

Indeed, take \(X\) as its own \'etale presentation.

\begin{example}
We now construct DMH stacks that are not schemes via quotient stacks of groupoids, illustrating that the class of DMH stacks is genuinely broader than schemes. We recall some basic concepts; see [Alper, §3.4]\cite{Alper}.
\end{example}

An \emph{\'etale groupoid of schemes} consists of a pair of schemes \(U\), \(R\) together with three \'etale morphisms: the source map \(s\colon R \to U\), the target map \(t\colon R \to U\), and a composition map \(c\colon R \times_{s,\,U,\,t} R \to R\), satisfying associativity, identity, and inversion axioms; see [Alper, Def.~3.4.1]\cite{Alper} for details.

We write an \'etale groupoid as \(s,t\colon R \rightrightarrows U\). If \((s,t)\colon R \to U \times U\) is a monomorphism, then \(s,t\colon R \rightrightarrows U\) is called an \emph{\'etale equivalence relation}.

If \(U\) and \(R\) are algebraic spaces and the structure maps are morphisms of algebraic spaces, we obtain an \emph{\'etale groupoid of algebraic spaces}, and similarly an \emph{\'etale equivalence relation of algebraic spaces}.\\

\medskip

\noindent\textbf{Classical example from a group action.}
[Alper, Ex.~3.4.3]\cite{Alper} Let \(G \to S\) be an \'etale group scheme with multiplication \(m\colon G \times_{S} G \to G\), and suppose \(G\) acts on an \(S\)-scheme \(U\) via \(\sigma\colon G \times U \to U\). Then
\[
p_{2},\,\sigma \colon G \times_{S} U \rightrightarrows U
\]
is an \'etale groupoid of schemes. The inverse map \(G \times_{S} U \to G \times_{S} U\) is given by \((g,u) \mapsto (g^{-1}, gu)\), and the composition
\[
\bigl(G \times_{S} U\bigr) \times_{p_{2},\,U,\,\sigma} \bigl(G \times_{S} U\bigr) \longrightarrow G \times_{S} U,\qquad
\bigl((g',u'),(g,u)\mid u' = g u\bigr) \longmapsto (g'g,\,u).
\]

We next define the quotient stack. Let \(s,t\colon R \rightrightarrows U\) be a \emph{smooth} groupoid in algebraic spaces. Define a prestack \([U/R]^{\mathrm{pre}}\) whose objects are morphisms of schemes \(T \to U\). A morphism \((a\colon S \to U) \to (b\colon T \to U)\) consists of a morphism \(f\colon S \to T\) and an element \(r \in R(S)\) (here \(R(S)\) denotes the set of all morphisms \(S \to R\)
) such that \(s(r)=a\) and \(t(r)= f \circ b\). The stack \([U/R]\) is the stackification of \([U/R]^{\mathrm{pre}}\) in the big \'etale topology.

For each scheme \(T\), the fiber category \([U/R]^{\mathrm{pre}}(T)\) is a groupoid with object set \(U(T)\) and morphism set \(R(T)\). The identity \( \mathrm{id}\colon U \to U\) induces a map \(U \to [U/R]^{\mathrm{pre}}\), hence we have a map \(p\colon U \to [U/R]\).

\begin{lemma}\label{lem:groupoid etale presentation}[Alper, Thm.~3.4.13]\cite{Alper}
If \(R \rightrightarrows U\) is an \'etale groupoid of algebraic spaces, then \([U/R]\) is a Deligne–Mumford stack, and \(U \to [U/R]\) is an \'etale presentation.
\end{lemma}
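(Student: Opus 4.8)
The plan is to verify the two defining properties: that \([U/R]\) is an algebraic stack whose diagonal is unramified (equivalently, that it is Deligne–Mumford), and that the canonical morphism \(p\colon U\to[U/R]\) is representable, étale and surjective. First I would record what stackification buys us: \([U/R]\) is the stackification of \([U/R]^{\mathrm{pre}}\) in the big étale topology, hence a stack in groupoids over \((\mathrm{Sch})_{\text{ét}}\), and every object of \([U/R]\) over a scheme \(T\) is étale-locally on \(T\) isomorphic to an object of \([U/R]^{\mathrm{pre}}(T)=U(T)\). Since stackification does not alter the \(\mathrm{Isom}\) presheaves, for \(a,b\in U(T)\) the sheaf \(\mathrm{Isom}_{[U/R]}(a,b)\) assigns to \((\varphi\colon S\to T)\) the set \(\{\,r\in R(S):s(r)=a\circ\varphi,\ t(r)=b\circ\varphi\,\}\), which is exactly the fibre product \(R\times_{(s,t),\,U\times U,\,(a,b)}T\); in particular \(\mathrm{Isom}_{[U/R]}(a,b)\) is representable by an algebraic space, and its structure morphism to \(T\) is a base change of \((s,t)\colon R\to U\times U\).

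Next I would treat the diagonal \(\Delta\colon[U/R]\to[U/R]\times[U/R]\). A morphism to \([U/R]\times[U/R]\) from a scheme \(T\) is, étale-locally on \(T\), a pair \((a,b)\in U(T)\times U(T)\), and the usual computation identifies \(T\times_{\Delta,\,[U/R]\times[U/R]}[U/R]\) with \(\mathrm{Isom}_{[U/R]}(a,b)\), which is representable by the previous step; representability of \(\Delta\) then follows by étale descent of representability. Moreover \((s,t)\colon R\to U\times U\) is unramified: composing with the two projections gives \(s\) and \(t\), which are étale, so \((s,t)\) is locally of finite type, and in the cotangent sequence for \(R\xrightarrow{(s,t)}U\times U\xrightarrow{\mathrm{pr}_1}U\) the sheaf \(\Omega_{R/U\times U}\) is a quotient of \(\Omega_{R/U}=0\) (where \(R\to U\) is \(s\)), so \((s,t)\) is formally unramified as well. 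Since the diagonal is, after any base change along a scheme, a base change of \((s,t)\), it is unramified.

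Then I would analyse the presentation \(p\colon U\to[U/R]\). Given a morphism \(T\to[U/R]\) from a scheme, étale-locally an object \(a\in U(T)\), an object of \(U\times_{[U/R]}T\) over \(S\) is a triple consisting of \(c\in U(S)\), \(\varphi\colon S\to T\), and an isomorphism \(c\xrightarrow{\ \sim\ }a\circ\varphi\) in \([U/R]\), i.e. an element \(r\in R(S)\) with \(s(r)=c\) and \(t(r)=a\circ\varphi\); since \(r\) determines \(c=s(r)\), this groupoid is discrete and its sheaf is \(R\times_{t,\,U,\,a}T\). Hence \(U\times_{[U/R]}T\cong R\times_{t,\,U,\,a}T\) is representable by an algebraic space, and its structure morphism to \(T\) is a base change of the étale morphism \(t\colon R\to U\), so it is étale; by descent, \(p\) is representable by algebraic spaces and étale, and it is surjective because every object of \([U/R]\) over any \(T\) is, by the first step, étale-locally in the image of \(U(T)\). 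Composing \(U\) with an étale surjection from a scheme if \(U\) is merely an algebraic space, we obtain an étale presentation of \([U/R]\) by a scheme. Putting these together: \([U/R]\) is a stack over \((\mathrm{Sch})_{\text{ét}}\) with representable diagonal admitting an étale (hence smooth) surjection from a scheme, so it is algebraic, and its diagonal is unramified, so it is Deligne–Mumford, with \(p\) an étale presentation.

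The step I expect to require the most care is the first one: verifying that stackification leaves the \(\mathrm{Isom}\) presheaves unchanged and that the groupoid axioms (associativity, identity, inverse) make the resulting \(\mathrm{Isom}\) sheaf literally equal to \(R\times_{U\times U}T\), so that \(R\) represents these sheaves. Once that identification is in place, representability of \(\Delta\) and of \(p\) are formal consequences of étale descent, and the Deligne–Mumford property reduces to the two-out-of-three observation that \((s,t)\) is unramified because \(s\) and \(t\) are étale. A secondary point worth being careful about is the bookkeeping distinguishing objects coming from \([U/R]^{\mathrm{pre}}\) from the descent data adjoined by stackification, which is what makes the phrase ``étale-locally in the image of \(U\)'' precise.
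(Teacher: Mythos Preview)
Your proof is correct and follows the standard route: identify the \(\mathrm{Isom}\) sheaves with fibre products of \(R\) over \(U\times U\), deduce representability and unramifiedness of the diagonal from the fact that \((s,t)\) is unramified (via the cotangent-sequence argument you give), and compute \(U\times_{[U/R]}T\cong R\times_{t,U,a}T\) to show \(p\) is representable, étale and surjective. One small point you leave implicit but which is needed: stackification leaves the \(\mathrm{Isom}\) presheaves unchanged \emph{because} they are already sheaves, and that in turn holds since \(R\) (hence \(R\times_{U\times U}T\)) is an algebraic space and therefore a sheaf on the big étale site; you should make this explicit rather than asserting it as a general property of stackification.

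As for comparison with the paper: the paper does not supply a proof of this lemma at all. It is quoted as \textup{[Alper, Thm.~3.4.13]} and used as a black box to conclude that quotient stacks of étale groupoids furnish examples of DMH stacks. Your argument is essentially the one Alper gives, so there is no genuine divergence in approach to discuss; you have simply filled in what the paper outsources to the reference.
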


Consequently, combining the group-action example with Lemma \ref{lem:groupoid etale presentation}, if \(U\) is a Noetherian scheme satisfying \(G_{1}\) and \(S_{2}\), then for any \'etale group scheme acting on \(U\), the quotient stack \([U/R]\) is a DMH stack. Thus DMH stacks occur abundantly.

For instance, Alper considers the action of the group \(\mathbb{Z}/2=\{\pm 1\}\) on \(\mathbb{A}^{1}\) given by
\(\sigma\colon \mathbb{Z}/2 \times \mathbb{A}^{1} \to \mathbb{A}^{1}\), \(-1 \cdot x = -x\). Removing the stabilizer at the origin yields the scheme
\(R = (\mathbb{Z}/2 \times \mathbb{A}^{1}) \setminus \{(-1,0)\}\) and an \'etale equivalence relation
\(p_{2},\,\sigma\colon R \rightrightarrows \mathbb{A}^{1}\). The quotient \(X=\mathbb{A}^{1}/R\) is a DMH stack, but it is not a scheme.

\section{Generalized Divisor}

In this chapter, we define generalized divisors on DMH stacks and prove that the definition is reasonable.

\subsection{Total Quotient Sheaves}
Let \(A\) be a commutative ring with identity, and let \(T\subset A\) be the multiplicatively closed set of nonzerodivisors (that is, an element \(a\in A\) is a nonzerodivisor if, for every \(b\in A\), the equality \(ab=0\) implies \(b=0\)). Localizing at \(T\) yields the ring \(T^{-1}A\), called the \emph{total quotient ring} of \(A\) and denoted \(A_{\mathrm{tot}}\). The natural map \(A \to A_{\mathrm{tot}}\) is injective: if the image of \(a\in A\) is zero in \(A_{\mathrm{tot}}\), then there exists \(t\in T\) with \(ta=0\); since \(t\) is a nonzerodivisor, we conclude \(a=0\). The total quotient ring generalizes the fraction field of an integral domain and serves to extend rational functions to nonreduced schemes.

Kleiman observed in \cite{kle} that the total quotient ring does not naturally globalize to a total quotient sheaf of \(\mathcal{O}_{X}\) on a scheme \(X\): for a general scheme \(X\), the assignment \(U \mapsto \mathcal{O}(U)_{\mathrm{tot}}\) need not be a presheaf. He provided the following counterexample: Let \(A\) be an integral domain with a nonzero maximal ideal \(M\); let \(P\) be the projective line over \(A\), and let \(Y\) be the (closed) fiber over \(M\). Set
\[
X=\operatorname{Spec}\!\bigl(\mathcal{O}_{P}\oplus \mathcal{O}_{Y}(-1)\bigr).
\]
Then
\[
\Gamma(X,\mathcal{O}_{X})=\Gamma(P,\mathcal{O}_{P})\oplus \Gamma\bigl(Y,\mathcal{O}_{Y}(-1)\bigr)=A.
\]
Hence any nonzero element \(t\in M\) is a nonzerodivisor in \(\Gamma(X,\mathcal{O}_{X})\). However, for any affine open \(U\subset X\) containing a point of \(Y\), the restriction of \(t\) to \(\Gamma(U,\mathcal{O}_{X})\) is a zerodivisor. Indeed, \(\mathcal{O}_{Y}(-1)|_{U}\cong \mathcal{O}_{Y}|_{U}\), so \(\Gamma\bigl(U,\mathcal{O}_{Y}(-1)\bigr)\) contains a nonzero section \(s\) with \(ts=0\). In this example one cannot define restriction maps \(\mathcal{O}(X)_{\mathrm{tot}} \to \mathcal{O}(U)_{\mathrm{tot}}\).

A correct definition of the total quotient sheaf on a scheme (also due to Kleiman) is as follows. For each open subset \(U\subseteq X\), set
\[
T=\bigl\{t\in \mathcal{O}(U)\,\bigm|\, \forall\,x\in X,\ \text{the stalk } t_{x}\in \mathcal{O}_{X,x}\text{ is a nonzerodivisor}\bigr\}.
\]
Then \(U\mapsto T^{-1}\mathcal{O}(U)\) is a presheaf; its sheafification is denoted \(\mathcal{K}_{X}\) and is called the \emph{total quotient sheaf} of \(\mathcal{O}_{X}\).

In fact, by a similar construction one can extend the notion of total quotient sheaf to DM stacks.

\begin{theorem}
Let \(\mathcal{X}\) be a Deligne–Mumford stack. For every \(\mathcal{X}\)-scheme \((U\to \mathcal{X})\), the assignment \((U\to \mathcal{X})\longmapsto T^{-1}\mathcal{O}(U)\) (with \(T\) defined as above) defines a presheaf on \(\mathcal{X}\), denoted \(\mathcal{K}'_{\mathcal{X}}\).
\end{theorem}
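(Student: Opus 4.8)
The plan is to verify the two presheaf axioms: that the assignment sends the final object (or rather, respects identities) and that it is functorial with respect to morphisms of $\mathcal{X}$-schemes. Concretely, given a morphism $\varphi\colon (V\to\mathcal{X})\to(U\to\mathcal{X})$ of $\mathcal{X}$-schemes (so $\varphi\colon V\to U$ is a morphism of schemes over $\mathcal{X}$), I need to produce a ring homomorphism $\mathcal{K}'_{\mathcal{X}}(U\to\mathcal{X})\to\mathcal{K}'_{\mathcal{X}}(V\to\mathcal{X})$, i.e. extend the restriction map $\mathcal{O}(U)\to\mathcal{O}(V)$ to the localizations $T_U^{-1}\mathcal{O}(U)\to T_V^{-1}\mathcal{O}(V)$. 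By the universal property of localization, it suffices to show that every $t\in T_U$ maps to a unit in $T_V^{-1}\mathcal{O}(V)$, and for that it is enough to show that the image of $t$ under $\mathcal{O}(U)\to\mathcal{O}(V)$ lies in $T_V$. Compatibility with composition of morphisms is then automatic from the uniqueness clause in the universal property of localization.

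So the crux is the following claim: if $t\in\mathcal{O}(U)$ has the property that $t_x\in\mathcal{O}_{U,x}$ is a nonzerodivisor for every $x\in U$, and $\varphi\colon V\to U$ is a morphism over $\mathcal{X}$, then the pullback $\varphi^\sharp(t)\in\mathcal{O}(V)$ has the property that its germ at every $y\in V$ is a nonzerodivisor in $\mathcal{O}_{V,y}$. The key point is that both $U\to\mathcal{X}$ and $V\to\mathcal{X}$ are representable by schemes and (being objects of the relevant category) the structure morphisms to $\mathcal{X}$ are \'etale; hence $\varphi\colon V\to U$ is automatically a morphism of \'etale $\mathcal{X}$-schemes, and in particular $\varphi$ itself is \'etale. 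Therefore, for $y\in V$ with image $x=\varphi(y)\in U$, the induced map on local rings $\mathcal{O}_{U,x}\to\mathcal{O}_{V,y}$ is flat (even \'etale). A flat ring homomorphism sends nonzerodivisors to nonzerodivisors: if $a\in\mathcal{O}_{U,x}$ is a nonzerodivisor, then $0\to\mathcal{O}_{U,x}\xrightarrow{\,a\,}\mathcal{O}_{U,x}$ is exact, and tensoring with the flat module $\mathcal{O}_{V,y}$ keeps it exact, so multiplication by the image of $a$ is injective on $\mathcal{O}_{V,y}$. Applying this with $a=t_x$ shows $\varphi^\sharp(t)_y$ is a nonzerodivisor, which is exactly the claim. (One should remark that in the theorem's generality ``$\mathcal{X}$-scheme'' is to be understood as an object of the small \'etale site, so that these flatness properties hold; if one allowed arbitrary $\mathcal{X}$-schemes the statement would fail, mirroring Kleiman's counterexample.)

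I would organize the write-up in three short steps: (1) recall the universal property of localization and reduce functoriality to the nonzerodivisor-preservation claim; (2) prove the claim using \'etaleness (hence flatness) of the induced maps on local rings together with the fact that flat base change preserves injectivity of multiplication maps; (3) check the identity axiom (trivial: the identity $U\to U$ induces the identity on $\mathcal{O}(U)$, hence on $T_U^{-1}\mathcal{O}(U)$) and compatibility with composition (automatic from uniqueness in the universal property).

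I expect the only real subtlety to be step (2), and within it the point worth spelling out carefully is \emph{why} the comparison is along a flat (indeed \'etale) local homomorphism: this uses that a morphism between two \'etale $\mathcal{X}$-schemes is itself \'etale, a standard fact about \'etale morphisms that I would cite. Everything else is formal. I would also add a sentence flagging that this presheaf $\mathcal{K}'_{\mathcal{X}}$ is generally \emph{not} a sheaf on $\mathcal{X}_{\text{\'et}}$ — its sheafification is the object of interest — since the failure of the sheaf property is precisely what the paper goes on to analyze.
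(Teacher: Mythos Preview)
Your proposal is correct and follows essentially the same route as the paper: reduce to showing that the restriction map $\mathcal{O}(V)\to\mathcal{O}(U)$ (the paper's labeling is reversed from yours) sends stalkwise nonzerodivisors to stalkwise nonzerodivisors, then use that a morphism between \'etale $\mathcal{X}$-schemes is itself \'etale, hence flat on local rings, and that flat base change preserves the injectivity of the multiplication-by-$t$ map. The paper omits the bookkeeping about identities and composition that you include, and works in the affine chart directly, but the substantive argument is identical.
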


\begin{proof}
It suffices to show that for an \'etale morphism of \(\mathcal{X}\)-schemes \(U\to V\) there is a restriction map
\(\mathcal{K}'_{\mathcal{X}}(V)\to \mathcal{K}'_{\mathcal{X}}(U)\). Write \(T_{U}\) and \(T_{V}\) for the corresponding multiplicatively closed sets in \(\mathcal{O}(U)\) and \(\mathcal{O}(V)\). We must prove that the image of any \(t\in T_{V}\) under \(\mathcal{O}(V)\to \mathcal{O}(U)\) lies in \(T_{U}\).

This is local: Assume \(U=\operatorname{Spec} A\) and \(V=\operatorname{Spec} B\) are affine schemes,the \'etale morphism between them induces a natural \'etale ring homomorphism \(f\colon B \to A\). For any prime ideal \(p\subset B\) with corresponding \(q\subset A\), we have a local \'etale homomorphism \(f_{p}\colon B_{p}\to A_{q}\). If \(r\in B_{p}\) is a nonzerodivisor, then multiplication
\(B_{p}\xrightarrow{\ \cdot r\ } B_{p}\) is a monomorphism of \(B_{p}\)-modules. Since \(A_{q}\) is flat over \(B_{p}\), the induced map
\[
A_{q}=B_{p}\otimes_{B_{p}}A_{q}\xrightarrow{\ \cdot f_{p}(r)\ } B_{p}\otimes_{B_{p}}A_{q}=A_{q}
\]
is still a monomorphism. Hence \(f_{p}(r)\) is a nonzerodivisor, as required.\\
\end{proof}

Sheafifying the presheaf \(\mathcal{K}'_{\mathcal{X}}\) on the small \'etale site of \(\mathcal{X}\) yields a sheaf \(\mathcal{K}_{\mathcal{X}}\). However, this sheaf may be difficult to use directly, as it need not have especially good properties. Indeed, for any \(\mathcal{X}\)-scheme \(U\), let \(\mathcal{K}_{U_{\mathrm{Zar}}}\) denote the sheafification on the Zariski site of \(U\) of the restriction \(\mathcal{K}'_{\mathcal{X}}|_{U_{\mathrm{Zar}}}\); this is a sheaf on \(U\). In general one cannot expect
\[
\mathcal{K}_{\mathcal{X}}(U)=\mathcal{K}_{U_{\mathrm{Zar}}}(U).
\]
In other words, \(\mathcal{K}_{\mathcal{X}}\) and \(\mathcal{K}_{U_{\mathrm{Zar}}}\) may have different sections. The reason is that the natural functor from the Zariski site to the \'etale site is not cocontinuous \([{\rm SP},\ \text{Tag }00XI]\); consequently, by \([{\rm SP},\ \text{Tag }039Z,\ \text{Lem.\ 7.29.1}]\) the categories of sheaves on the two sites are not equivalent.\\

Following [Har. Prop.~2.1]\cite{har2}, one readily verifies that \(\mathcal{K}_{U_{\mathrm{Zar}}}\) has the following properties:

\begin{lemma}\label{lem:conclusions of Kx}
Let \(U\) be any \(\mathcal{X}\)-scheme over a DMH stack \(\mathcal{X}\). Then:
\begin{enumerate}\item[(a)]
For every affine open subset \(V\subset U\), we have \(\mathcal{K}_{U_{\mathrm{Zar}}}(V)=\mathcal{O}_{\mathcal{X}}(V)_{\mathrm{tot}}\).
\item[(b)]
For every point \(x\in U\), we have \(\mathcal{K}_{U_{\mathrm{Zar}},x}=\bigl(\mathcal{O}_{\mathcal{X},x}\bigr)_{\mathrm{tot}}\), where the stalk is taken with respect to Zariski neighborhoods of \(x\).
\item[(c)]
\(\mathcal{K}_{U_{\mathrm{Zar}}}\cong \bigoplus j_{*}\bigl(\mathcal{O}_{\mathcal{X},\eta}\bigr)\), where the direct sum ranges over the generic points \(\eta\) of the irreducible components of \(U\), and \(j\colon \{\eta\}\hookrightarrow U\) is the inclusion.
\item[(d)]
\(\mathcal{K}_{U_{\mathrm{Zar}}}\) is a quasi-coherent \(\mathcal{O}_{\mathcal{X}}|_{U_{\mathrm{Zar}}}\)-module.
\item[(e)]
\(\mathcal{K}_{U_{\mathrm{Zar}}}\) is an injective \(\mathcal{O}_{\mathcal{X}}|_{U_{\mathrm{Zar}}}\)-module.
\end{enumerate}
\end{lemma}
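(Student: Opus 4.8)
The strategy is to reduce everything to the scheme-theoretic statement. Since $U$ is étale over the DMH stack $\mathcal X$, Corollary~\ref{cor:Gr and Sr} gives that $U$ is a Noetherian scheme satisfying $G_1$ and $S_2$, hence also $S_1$ (no embedded components) and $G_0$ (the local rings at generic points of components are Gorenstein). For a Zariski open $W\subseteq U$ one has $\mathcal K'_{\mathcal X}|_{U_{\mathrm{Zar}}}(W)=\mathcal K'_{\mathcal X}(W\to\mathcal X)=T_W^{-1}\mathcal O_U(W)$, with $T_W$ the set of sections of $\mathcal O_U$ over $W$ with nonzerodivisor germ at every point of $W$; this is precisely the presheaf whose Zariski-sheafification is the total quotient sheaf of the scheme $U$ in the sense of \cite[Prop.~2.1]{har2}. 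Hence $\mathcal K_{U_{\mathrm{Zar}}}$ is Hartshorne's total quotient sheaf on $U$, and (a)--(e) are \cite[Prop.~2.1(a)--(e)]{har2}. I would nonetheless rerun the argument directly, to isolate which hypothesis each part consumes.

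The ring-theoretic core is the following statement about a Noetherian ring $A$ with no embedded primes: the set $T_A=\{\,t\in A:\ t_{\mathfrak p}\text{ is a nonzerodivisor in }A_{\mathfrak p}\text{ for all }\mathfrak p\,\}$ equals the set of nonzerodivisors of $A$ — one inclusion is exactness of localization, the other because a zerodivisor lies in an associated prime $\mathfrak p=\operatorname{Ann}(b)$ with $b/1\neq 0$ in $A_{\mathfrak p}$ — and, since $\operatorname{Ass}A=\operatorname{Min}A$, prime avoidance and the structure of zero-dimensional Noetherian rings yield a canonical isomorphism $A_{\mathrm{tot}}\xrightarrow{\ \sim\ }\prod_{\mathfrak q\in\operatorname{Min}A}A_{\mathfrak q}$ onto a finite product of Artinian local rings. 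Applying this with $A=\mathcal O_U(V)$, $V=\operatorname{Spec}A$ affine, the presheaf value is $A_{\mathrm{tot}}$ and the restriction to $D(f)$ is the localization $A_{\mathrm{tot}}\to(A_{\mathrm{tot}})_f=(A_f)_{\mathrm{tot}}$; thus the presheaf is already a sheaf on the affine base, which gives (a), and also (d), since then $\mathcal K_{U_{\mathrm{Zar}}}|_V\cong\widetilde{A_{\mathrm{tot}}}$ is quasi-coherent (it is not coherent in general, as $A_{\mathrm{tot}}$ need not be finite over $A$, but coherence is not asserted).

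For (c) I would introduce $\mathcal G:=\bigoplus_{\eta}(j_{\eta})_{*}\mathcal O_{U,\eta}$, the sum over generic points $\eta$ of components and $j_\eta\colon\operatorname{Spec}\mathcal O_{U,\eta}\to U$ the canonical morphism; this is the sheaf written $\bigoplus j_*(\mathcal O_{\mathcal X,\eta})$ in the statement. Each $\mathcal O_{U,\eta}$ is Artinian, so a member of $T_W$ has unit germ at every $\eta\in W$, and the localization maps assemble into a morphism of presheaves $\mathcal K'_{\mathcal X}|_{U_{\mathrm{Zar}}}\to\mathcal G$; by the displayed isomorphism it is an isomorphism on affine opens, hence on stalks, so $\mathcal G$ — being a sheaf — is the sheafification, i.e. $\mathcal K_{U_{\mathrm{Zar}}}\cong\mathcal G$, proving (c). Part (b) then follows by computing $\mathcal G_x=\bigoplus_{\eta:\,x\in\overline{\{\eta\}}}\mathcal O_{U,\eta}=\bigoplus_{\mathfrak q\in\operatorname{Min}\mathcal O_{U,x}}(\mathcal O_{U,x})_{\mathfrak q}$ and applying the ring-theoretic core to $\mathcal O_{U,x}$, which inherits $S_1$, to identify this with $(\mathcal O_{U,x})_{\mathrm{tot}}$ (alternatively, identify the presheaf stalk $\varinjlim_{V\ni x}\mathcal O_U(V)_{\mathrm{tot}}$ with $(\mathcal O_{U,x})_{\mathrm{tot}}$ by a short spreading-out argument using Noetherianity).

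Part (e) is the one step that is not formal, and it is where the Gorenstein hypothesis genuinely enters: by $G_0$, each $\mathcal O_{U,\eta}$ is an Artinian Gorenstein local ring, hence injective as a module over itself. The functor $j_\eta^{*}$ on $\mathcal O_U$-modules is restriction to the generic point (the map $j_\eta^{-1}\mathcal O_U\to\mathcal O_{\operatorname{Spec}\mathcal O_{U,\eta}}$ is an isomorphism, so $j_\eta^{*}=j_\eta^{-1}$), hence exact, so its right adjoint $(j_\eta)_*$ preserves injectives; thus each $(j_\eta)_*\mathcal O_{U,\eta}$ is an injective $\mathcal O_U$-module, and, $U$ being locally Noetherian, a direct sum of injective $\mathcal O_U$-modules is injective (or restrict to a quasi-compact open, where the sum is finite), so $\mathcal K_{U_{\mathrm{Zar}}}=\bigoplus_\eta(j_\eta)_*\mathcal O_{U,\eta}$ is injective. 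The main obstacle is thus not a single hard computation but keeping track of the two facts that actually require the DMH hypothesis — the decomposition $A_{\mathrm{tot}}\cong\prod_{\mathfrak q}A_{\mathfrak q}$, which fails without $S_1$, and the self-injectivity of $\mathcal O_{U,\eta}$, which fails without $G_0$; granting those, the whole lemma is \cite[Prop.~2.1]{har2} transported verbatim to $U_{\mathrm{Zar}}$.
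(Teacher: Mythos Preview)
Your proposal is correct and takes essentially the same approach as the paper: the paper simply states that the lemma follows from \cite[Prop.~2.1]{har2} once one knows $U$ is Noetherian and satisfies $G_1+S_2$, and you do exactly this reduction (via Corollary~\ref{cor:Gr and Sr}) followed by a faithful rerun of Hartshorne's argument. The paper gives no further details, so your expanded treatment of the ring-theoretic core and of part~(e) is more than what the paper supplies, but the route is the same.
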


Let \(\mathcal{K}_{\mathcal{X}_{\mathrm{Zar}}}\) denote the sheafification of \(\mathcal{K}'_{\mathcal{X}}\) on the small Zariski site of \(\mathcal{X}\) (obtained by replacing \'etale morphisms with open immersions in Definition \ref{def:small etale site}). For any open set \(U\) in the small Zariski site, we naturally have \(\mathcal{K}_{\mathcal{X}_{\mathrm{Zar}}}|_{U}=\mathcal{K}_{U_{\mathrm{Zar}}}\). We now sketch, using topos-theoretic language, the relationship between \(\mathcal{K}_{\mathcal{X}_{\mathrm{Zar}}}\) and \(\mathcal{K}_{\mathcal{X}}\).\\

\begin{definition}[\textup{SP, Tag 00WU}]
\textbf{(Continuous functor)} Let \(\mathcal{C}\) and \(\mathcal{D}\) be sites. A functor \(u\colon \mathcal{C}\to \mathcal{D}\) is \emph{continuous} if for every covering \(\{V_{i}\to V\}_{i\in I}\in \operatorname{Cov}(\mathcal{C})\) we have:
\begin{enumerate}
\item \(\{u(V_{i})\to u(V)\}_{i\in I}\) is a covering in \(\operatorname{Cov}(\mathcal{D})\);
\item for every morphism \(T\to V\) in \(\mathcal{C}\), the induced map
\(u(T\times_{V} V_{i}) \to u(T)\times_{u(V)} u(V_{i})\)
is an isomorphism.
\end{enumerate}
\end{definition}

Given such a functor \(u\) and a presheaf \(\mathcal{F}\) on \(\mathcal{D}\), define a presheaf \(u^{p}\mathcal{F} = \mathcal{F}\circ u\), i.e.
\[
u^{p}\mathcal{F}(V)=\mathcal{F}\bigl(u(V)\bigr)
\]
for every object \(V\) of \(\mathcal{C}\).

We take \(\mathcal{C}\) to be the small Zariski site \(\mathcal{X}_{\mathrm{Zar}}\) and \(\mathcal{D}\) the small \'etale site \(\mathcal{X}_{\text{\'et}}\); let \(u\colon \mathcal{C}\to \mathcal{D}\) be given on objects by \(u(V)=V\). Since open immersions are \'etale, it is easy to check that \(u\) is continuous.

With this notation, for any \(V\in \operatorname{Ob}(\mathcal{D})\) define a category \(\mathcal{I}_{V}\) by
\[
\mathrm{Ob}(\mathcal{I}_{V})=\{(U,\phi)\mid U\in \mathrm{Ob}(\mathcal{C}),\ \phi\colon V\to u(U)\},
\]
\[
\operatorname{Mor}_{\mathcal{I}_{V}}\bigl((U,\phi),(U',\phi')\bigr)
=\{\,f\colon U\to U' \text{ in }\mathcal{C}\mid u(f)\circ \phi=\phi'\,\}.
\]
Given a morphism \(g\colon V'\to V\) in \(\mathcal{D}\), there is a functor \(\bar{g}\colon \mathcal{I}_{V}\to \mathcal{I}_{V'}\) defined by \(\bar{g}(U,\phi)=(U,\phi\circ g)\). For a presheaf \(\mathcal{F}\) on \(\mathcal{C}\), define
\[
\mathcal{F}_{V}\colon \mathcal{I}_{V}^{\mathrm{opp}}\longrightarrow \mathrm{Ab},\qquad (U,\phi)\longmapsto \mathcal{F}(U).
\]
Thus \(\mathcal{F}_{V}\) is a presheaf on \(\mathcal{I}_{V}\), and we have \(\mathcal{F}_{V'}\circ \bar{g}=\mathcal{F}_{V}\). Set
\[
u_{p}\mathcal{F}(V)=\operatorname*{colim}\nolimits_{\mathcal{I}_{V}^{\mathrm{opp}}}\mathcal{F}_{V}.
\]
By the universal property of colimits, for each \((U,\phi)\in \mathrm{Ob}(\mathcal{I}_{V})\) there is a canonical map \(c(\phi)\colon \mathcal{F}(U)\to u_{p}\mathcal{F}(V)\). For any \(g\colon V'\to V\) in \(\mathcal{D}\) there is a canonical restriction map \(g^{*}\colon u_{p}\mathcal{F}(V)\to u_{p}\mathcal{F}(V')\). As explained in \([{\rm SP},\ \text{Tag }00VC]\), \(u_{p}\mathcal{F}\) is a presheaf on \(\mathcal{D}\), and \(u_{p}\colon \operatorname{PSh}(\mathcal{C})\to \operatorname{PSh}(\mathcal{D})\) is a functor of presheaf categories.

\begin{lemma}
[\textup{SP, Tag 00WY}] Let \(\mathcal{C}\) and \(\mathcal{D}\) be sites, and let \(u\colon \mathcal{C}\to \mathcal{D}\) be a continuous functor. Then for any presheaf \(\mathcal{G}\) on \(\mathcal{C}\) we have
\[
\bigl(u_{p}\mathcal{G}\bigr)^{\#} \;=\; \bigl(u_{p}(\mathcal{G}^{\#})\bigr)^{\#},
\]
where \({}^{\#}\) denotes sheafification of presheaves.
\end{lemma}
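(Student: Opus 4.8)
The plan is to deduce this identity purely formally, from the presheaf-level adjunction $\Hom_{\operatorname{PSh}(\mathcal{D})}(u_{p}\mathcal{F},\mathcal{H})=\Hom_{\operatorname{PSh}(\mathcal{C})}(\mathcal{F},u^{p}\mathcal{H})$ (SP, Tag~00VC, already used above) together with the universal property of sheafification. First I would record two standard inputs: (i) the unit $\eta_{\mathcal{G}}\colon\mathcal{G}\to\mathcal{G}^{\#}$ is the universal morphism from $\mathcal{G}$ to a sheaf, i.e.\ $(-)^{\#}$ is left adjoint to the inclusion $\iota\colon\operatorname{Sh}\hookrightarrow\operatorname{PSh}$; and (ii) since $u$ is continuous, $u^{p}$ sends sheaves to sheaves, so $u^{p}\iota\mathcal{H}$ is again a sheaf on $\mathcal{C}$ whenever $\mathcal{H}$ is a sheaf on $\mathcal{D}$. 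Input (ii) is the only place continuity enters: its verification uses both clauses of the definition of a continuous functor --- condition (1) to turn coverings of $\mathcal{C}$ into coverings of $\mathcal{D}$, and condition (2) to identify the fibre products $u(T\times_{V}V_{i})$ with $u(T)\times_{u(V)}u(V_{i})$ occurring in the sheaf axiom --- and I regard this as the substantive content, even though it is standard and can simply be cited.

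Next I would build the comparison map: applying $u_{p}$ to $\eta_{\mathcal{G}}$ and then sheafifying produces a canonical
\[
\theta_{\mathcal{G}}\colon (u_{p}\mathcal{G})^{\#}\longrightarrow\bigl(u_{p}(\mathcal{G}^{\#})\bigr)^{\#},
\]
and the assertion is that $\theta_{\mathcal{G}}$ is an isomorphism. I would prove this by Yoneda, showing that for every sheaf $\mathcal{H}$ on $\mathcal{D}$ precomposition with $\theta_{\mathcal{G}}$ is a bijection from $\Hom_{\operatorname{Sh}(\mathcal{D})}\bigl((u_{p}(\mathcal{G}^{\#}))^{\#},\mathcal{H}\bigr)$ to $\Hom_{\operatorname{Sh}(\mathcal{D})}\bigl((u_{p}\mathcal{G})^{\#},\mathcal{H}\bigr)$. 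On the source, the two adjunctions give
\[
\Hom_{\operatorname{Sh}(\mathcal{D})}\bigl((u_{p}(\mathcal{G}^{\#}))^{\#},\mathcal{H}\bigr)
=\Hom_{\operatorname{PSh}(\mathcal{D})}\bigl(u_{p}(\mathcal{G}^{\#}),\iota\mathcal{H}\bigr)
=\Hom_{\operatorname{PSh}(\mathcal{C})}\bigl(\mathcal{G}^{\#},u^{p}\iota\mathcal{H}\bigr),
\]
and since $u^{p}\iota\mathcal{H}$ is a sheaf by (ii), the universal property (i) of $\eta_{\mathcal{G}}$ identifies this last group with $\Hom_{\operatorname{PSh}(\mathcal{C})}(\mathcal{G},u^{p}\iota\mathcal{H})$ via precomposition with $\eta_{\mathcal{G}}$. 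On the target, the same two steps give $\Hom_{\operatorname{Sh}(\mathcal{D})}\bigl((u_{p}\mathcal{G})^{\#},\mathcal{H}\bigr)=\Hom_{\operatorname{PSh}(\mathcal{C})}(\mathcal{G},u^{p}\iota\mathcal{H})$ directly. Thus both Hom-sets are canonically the same group, and what remains is to check that this common identification intertwines precomposition with $\theta_{\mathcal{G}}$.

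That compatibility is the one piece I expect to take actual care: it is a diagram chase using naturality of the adjunction of (SP, Tag~00VC) in its argument and naturality of the sheafification unit $\eta$, together with the defining formula $\theta_{\mathcal{G}}=(u_{p}\eta_{\mathcal{G}})^{\#}$ --- concretely, unwinding $\psi\circ\theta_{\mathcal{G}}$ through the two adjunctions yields $\widehat{\psi}\circ\eta_{\mathcal{G}}$, which is exactly the image of $\psi$ under the source-side identification, so precomposition with $\theta_{\mathcal{G}}$ becomes the identity on $\Hom_{\operatorname{PSh}(\mathcal{C})}(\mathcal{G},u^{p}\iota\mathcal{H})$. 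No estimate or hard computation is involved, only careful bookkeeping of units and counits. Granting this, Yoneda in $\operatorname{Sh}(\mathcal{D})$ forces $\theta_{\mathcal{G}}$ to be an isomorphism, which is the claimed equality $(u_{p}\mathcal{G})^{\#}=(u_{p}(\mathcal{G}^{\#}))^{\#}$. Equivalently, and more conceptually, the same computation shows that both functors $\mathcal{G}\mapsto(u_{p}\mathcal{G})^{\#}$ and $\mathcal{G}\mapsto(u_{p}(\mathcal{G}^{\#}))^{\#}$ from $\operatorname{PSh}(\mathcal{C})$ to $\operatorname{Sh}(\mathcal{D})$ are left adjoint to $\mathcal{H}\mapsto u^{p}\iota\mathcal{H}$, hence canonically naturally isomorphic by uniqueness of adjoints; I would present the explicit map $\theta_{\mathcal{G}}$ so that the isomorphism is visibly the canonical one.
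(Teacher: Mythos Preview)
The paper does not supply its own proof of this lemma; it merely cites \textup{[SP, Tag 00WY]} and moves on. So there is nothing in the paper to compare your argument against directly.

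That said, your proposal is correct and is essentially the standard argument (and indeed the one in the Stacks Project): the presheaf adjunction $u_{p}\dashv u^{p}$, the sheafification adjunction $(-)^{\#}\dashv\iota$, and the fact that continuity of $u$ forces $u^{p}$ to preserve sheaves combine to exhibit both $(u_{p}\mathcal{G})^{\#}$ and $(u_{p}(\mathcal{G}^{\#}))^{\#}$ as left adjoints to $\mathcal{H}\mapsto u^{p}\iota\mathcal{H}$, whence they are canonically isomorphic. Your care in writing down the explicit comparison map $\theta_{\mathcal{G}}=(u_{p}\eta_{\mathcal{G}})^{\#}$ and checking via Yoneda that it realizes the uniqueness-of-adjoints isomorphism is more than adequate; the only substantive input, as you note, is (ii), and that is precisely \textup{[SP, Tag 00WW]}. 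Nothing is missing.
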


Take \(\mathcal{C}\) to be the small Zariski site \(\mathcal{X}_{\mathrm{Zar}}\), \(\mathcal{D}\) the small \'etale site \(\mathcal{X}_{\text{\'et}}\), and \(\mathcal{G}=\mathcal{K}'_{\mathcal{X}}\). By the lemma and the continuity of \(u\), we obtain
\[
\bigl(u_{p}\mathcal{K}'_{\mathcal{X}}\bigr)^{\#} \;=\; \bigl(u_{p}(\mathcal{K}_{\mathcal{X}_{\mathrm{Zar}}})\bigr)^{\#}.
\]
By the definition of colimits, for any object \(V\) of \(\mathcal{X}_{\text{\'et}}\), the value \(u_{p}\mathcal{K}'_{\mathcal{X}}(V)\) is obtained from the disjoint union of the sets \(\mathcal{K}'_{\mathcal{X}}(U_{i})\) over all \((U_{i},\phi_{i})\in \mathrm{Ob}(\mathcal{I}_{V})\) by quotienting with respect to the equivalence relation
\[
x \sim u(f)(x), \qquad f\colon U_{i}\to U_{j},\; x\in \mathcal{K}'_{\mathcal{X}}(U_{i}).
\]
Thus, even if \(V\) is an object of \(\mathcal{X}_{\mathrm{Zar}}\), there may exist an open \(U\in \mathcal{X}_{\mathrm{Zar}}\) with \(U\subseteq V\) and an \'etale morphism \(\phi\colon V\to u(U)=U\). In this case, compared to \(\mathcal{K}'_{\mathcal{X}}(V)\), the set \(u_{p}\mathcal{K}'_{\mathcal{X}}(V)\) may fail to incorporate certain identifications arising from the restriction maps \(\mathcal{K}'_{\mathcal{X}}(V)\to \mathcal{K}'_{\mathcal{X}}(U)\). Consequently, in general \(u_{p}\mathcal{K}'_{\mathcal{X}}(V)\) and \(\mathcal{K}'_{\mathcal{X}}(V)\) need not agree.
\[
\begin{tikzcd}
	V && {u(U_{1})} \\
	& {u(U_{2})}
	\arrow["{\phi_{1}}", from=1-1, to=1-3]
	\arrow["{\phi_{2}}"', from=1-1, to=2-2]
	\arrow["{u(f)}", from=1-3, to=2-2]
\end{tikzcd}
\]

By Definition \ref{def:quasi-coherent} in Chapter~2, for a DM stack \(\mathcal{X}\), an \(\mathcal{O}_{\mathcal{X}}\)-module \(F\) is quasi-coherent provided it satisfies \emph{\'etale compatibility}: for every \'etale morphism \(f\colon U\to V\) of \'etale \(\mathcal{X}\)-schemes, the natural map
\[
f^{*}\bigl(F|_{V_{\mathrm{Zar}}}\bigr)\;\longrightarrow\; F|_{U_{\mathrm{Zar}}}
\]
is an isomorphism. We now give a counterexample showing that, on a DMH stack \(\mathcal{X}\), the \(\mathcal{O}_{\mathcal{X}}\)-module \(\mathcal{K}_{\mathcal{X}_{\mathrm{Zar}}}\) does not enjoy this property.\\

\begin{example}
Let \(\mathcal{X}\) be a DMH stack. There exists an \'etale morphism \(f\colon U\to V\) of \(\mathcal{X}\)-schemes such that \(\mathcal{K}_{U_{\mathrm{Zar}}}\) is not isomorphic to \(f^{*}(\mathcal{K}_{V_{\mathrm{Zar}}})\).
\end{example}

To prove \(\mathcal{K}_{U_{\mathrm{Zar}}}\cong f^{*}(\mathcal{K}_{V_{\mathrm{Zar}}})\) it would suffice to compare their stalks (with respect to the Zariski topology). For a point \(x\in U\) with image \(f(x)\), choose affine neighborhoods \(\operatorname{Spec}A\) of \(x\) and \(\operatorname{Spec}B\) of \(f(x)\), corresponding to an \'etale ring homomorphism \(g\colon B\to A\). Let \(p\subset A\) be the prime ideal corresponding to \(x\); then \(f(x)\) corresponds to \(g^{-1}(p)\). Using the stalk formula [Görtz]\cite{Gor}
\[
\bigl(f^{*}\mathscr{G}\bigr)_{x}\;\cong\; \mathscr{O}_{X,x}\otimes_{\mathscr{O}_{Y,f(x)}} \mathscr{G}_{f(x)},
\]
together with
\[
\mathcal{K}_{U_{\mathrm{Zar}},x} \;=\; \bigl(\mathcal{O}_{\mathcal{X},x}\bigr)_{\mathrm{tot}},
\]
we see that \(\mathcal{K}_{U_{\mathrm{Zar}}}\cong f^{*}(\mathcal{K}_{V_{\mathrm{Zar}}})\) is equivalent to
\[
A_{p}\otimes_{B_{g^{-1}(p)}} \bigl(B_{g^{-1}(p)}\bigr)_{\mathrm{tot}}
\;=\;
\bigl(A_{p}\bigr)_{\mathrm{tot}}.
\]
In commutative algebra, for any \(A\)-module \(M\) and multiplicatively closed subset \(S\subset A\), there is a canonical isomorphism of \(A_{S}\)-modules
\(M_{S} \xrightarrow{\sim} M\otimes_{A} A_{S}\). Hence
\[
A_{p}\otimes_{B_{g^{-1}(p)}} \bigl(B_{g^{-1}(p)}\bigr)_{\mathrm{tot}}
\;=\;
A_{p}\otimes_{B_{g^{-1}(p)}} T_{B}^{-1}B_{g^{-1}(p)}
\;=\;
T_{B}^{-1}A_{p},
\]
where \(T_{B}\) is the multiplicatively closed set of nonzerodivisors in \(B_{g^{-1}(p)}\). Therefore, the question whether \(\mathcal{K}_{U_{\mathrm{Zar}}}\cong f^{*}(\mathcal{K}_{V_{\mathrm{Zar}}})\) reduces to whether \(T_{B}^{-1}A_{p}\cong (A_{p})_{\mathrm{tot}}\).

By a previous theorem, an \'etale local homomorphism carries nonzerodivisors to nonzerodivisors, so \(g(T_{B})\subseteq T_{A}\). We now exhibit an example where there exists a nonzerodivisor in \(A_{p}\) not lying in \(g(T_{B})\). Take \(B=k[x]\) with \(\operatorname{char}k\ne 2\), and set \(A=k[x,t]/(t^{2}-t-x)\). The natural map
\[
k[x] \longrightarrow k[x,t]/(t^{2}-t-x)
\]
is intuitively the projection of the curve \(t^{2}-t=x\) onto the \(x\)-axis, hence is \'etale. The ring \(k[x]\) is a PID; its nonzero localizations at prime ideals are discrete valuation rings, so it is a one-dimensional Noetherian integrally closed domain. By the Auslander–Buchsbaum formula, the depth of a regular local ring equals its Krull dimension; hence \(k[x]\) satisfies \(S_{2}\), and \(G_{1}\) is also clear. By Corollary \ref{cor:Gr and Sr} in Chapter~2, the ring \(A\) also satisfies \(G_{1}\) and \(S_{2}\).

Let \((t)\) be the maximal ideal of \(A\), which corresponds to the maximal ideal \((x)\) of \(B\) (indeed, \((x)\subseteq (t)\) in \(A\) by the relation \(t^{2}-t-x=0\)). We have an \'etale local homomorphism
\[
k[x]_{(x)} \longrightarrow \bigl(k[x,t]/(t^{2}-t-x)\bigr)_{(t)}.
\]
Now \(t\) is a nonzerodivisor in \(A_{(t)}\), but \(t\) is not the image of any nonzerodivisor of \(B_{(x)}\). \(\Box\)

\subsection{Genralized Divisors on DMH Stacks}

Let \(X\) be a Noetherian scheme satisfying \(G_{1}\) and \(S_{2}\). A \emph{fractional ideal} on \(X\) is a subsheaf \(\mathcal{I}\subseteq \mathcal{K}_{X}\) which is a coherent \(\mathcal{O}_{X}\)-submodule of \(\mathcal{K}_{X}\). (We will return to this definition below.) In [Har.]\cite{har2}, Hartshorne defines a generalized divisor on \(X\) to be a nondegenerate reflexive fractional ideal of \(X\).

However, by the discussion in §3.1, the total quotient sheaf \(\mathcal{K}_{\mathcal{X}}\) on a DMH stack \(\mathcal{X}\) has rather opaque properties (since the sections and stalks of the \'etale sheafification of the presheaf \(\mathcal{K}'_{\mathcal{X}}\) are difficult to compute), and our example showed that \(\mathcal{K}_{U_{\mathrm{Zar}}}\) may fail to have good coherence properties. Consequently, fractional ideals on a DMH stack need not be coherent. For this reason, the definition of generalized divisors on a DMH stack cannot simply mimic the schematic case by using nondegenerate fractional ideals in \(\mathcal{K}_{\mathcal{X}}\). Instead, we define generalized divisors in terms of reflexive coherent sheaves on \(\mathcal{X}\) which are locally free of rank \(1\) at every generic point. Of course, we shall verify that when the DMH stack is a scheme, this notion agrees with Hartshorne's definition.

We emphasize that, in this paper, for a generic point on an \'etale \(\mathcal{X}\)-scheme \(U\), we mean a generic point with respect to the Zariski topology of the scheme \(U\) (not the \'etale topology).

\begin{proposition}\label{prop:reflexive coherent Ou-module}
Let \(U\) and \(V\) be Noetherian schemes satisfying \(G_{1}\) and \(S_{2}\), and let \(f\colon U\to V\) be an \'etale morphism. If \(\mathcal{F}\) is a reflexive coherent \(\mathcal{O}_{V}\)-module (see Definition \ref{def:reflexive modules}), then \(f^{*}\mathcal{F}\) is a reflexive coherent \(\mathcal{O}_{U}\)-module.
\end{proposition}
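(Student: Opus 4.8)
The plan is to reduce the statement to a local, purely module-theoretic assertion and then apply Lemma~\ref{lem:reflexive exact sequence}, using crucially that étale morphisms are flat. First I would dispose of coherence: $f^{*}\mathcal{F}$ is coherent because $f$ is a morphism of Noetherian schemes and the pullback of a coherent sheaf along any morphism of locally Noetherian schemes is coherent. It remains to prove reflexivity, and, as noted after Definition~\ref{def:reflexive modules}, reflexivity of a coherent sheaf on a Noetherian scheme can be checked on an affine open cover. So fix $u\in U$, choose an affine open $\operatorname{Spec}A\subseteq U$ containing $u$ small enough that $f$ carries it into an affine open $\operatorname{Spec}B\subseteq V$; then $\operatorname{Spec}A\to\operatorname{Spec}B$ is étale, hence the corresponding ring homomorphism $g\colon B\to A$ is flat, we have $\mathcal{F}|_{\operatorname{Spec}B}=\widetilde{M}$ for a finitely generated reflexive $B$-module $M$, and $(f^{*}\mathcal{F})|_{\operatorname{Spec}A}=\widetilde{M\otimes_{B}A}$. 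Everything thus comes down to showing that $M\otimes_{B}A$ is a reflexive $A$-module.

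Next I would invoke Lemma~\ref{lem:reflexive exact sequence}. Since $V$ satisfies $G_{1}+S_{2}$, the ring $B$ satisfies $G_{0}+S_{1}$ (immediately from the definitions, $G_{1}\Rightarrow G_{0}$ and $S_{2}\Rightarrow S_{1}$), so reflexivity of $M$ yields a short exact sequence $0\to M\to L\to N\to 0$ with $L$ free and with an embedding $0\to N\to L'$ into a free module $L'$. Tensoring with $A$ over $B$ and using flatness of $g$, the sequence $0\to M\otimes_{B}A\to L\otimes_{B}A\to N\otimes_{B}A\to 0$ stays exact, $L\otimes_{B}A$ is free over $A$, and $0\to N\otimes_{B}A\to L'\otimes_{B}A$ is exact with $L'\otimes_{B}A$ free. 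Hence $M\otimes_{B}A$ fits into a short exact sequence of exactly the shape demanded by Lemma~\ref{lem:reflexive exact sequence}: free middle term, cokernel a submodule of a free module.

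To conclude I would apply the converse direction of Lemma~\ref{lem:reflexive exact sequence} over $A$; this needs $A$ to satisfy $G_{0}+S_{1}$, which holds because $U$ satisfies $G_{1}+S_{2}$ by hypothesis (equivalently by Corollary~\ref{cor:Gr and Sr}). Therefore $M\otimes_{B}A$ is reflexive, and since $u$ was arbitrary, $f^{*}\mathcal{F}$ is a reflexive coherent $\mathcal{O}_{U}$-module.

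I do not expect a deep obstacle here; the only points requiring care are the reduction step (shrinking so an affine open of $U$ lands in an affine open of $V$, together with the locality of reflexivity for coherent sheaves) and the bookkeeping that $G_{1}+S_{2}$ descends to $G_{0}+S_{1}$ on both $U$ and $V$, so that Lemma~\ref{lem:reflexive exact sequence} applies at each end. A more functorial alternative avoids Lemma~\ref{lem:reflexive exact sequence}: since $\mathcal{F}$ is coherent, hence finitely presented, and $f$ is flat, flat base change for the sheaf $\mathcal{H}om$ gives a canonical isomorphism $f^{*}\mathcal{H}om_{\mathcal{O}_{V}}(\mathcal{F},\mathcal{O}_{V})\cong\mathcal{H}om_{\mathcal{O}_{U}}(f^{*}\mathcal{F},\mathcal{O}_{U})$, i.e.\ $f^{*}(\mathcal{F}^{\vee})\cong(f^{*}\mathcal{F})^{\vee}$; iterating this and checking that the resulting identifications carry $f^{*}$ of the biduality map $\mathcal{F}\to\mathcal{F}^{\vee\vee}$ to the biduality map of $f^{*}\mathcal{F}$ finishes the proof. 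In that route the one thing that genuinely needs verification is this last compatibility of natural transformations, which is routine diagram-chasing.
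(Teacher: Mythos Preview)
Your proposal is correct and follows essentially the same strategy as the paper: reduce to the affine case, invoke Lemma~\ref{lem:reflexive exact sequence} to obtain $0\to M\to L\to N\to 0$, tensor with the flat $B$-algebra $A$, and apply the converse direction of the same lemma over $A$. Your execution is in fact slightly cleaner than the paper's, which inserts an unnecessary detour through a standard \'etale presentation $A\cong B[x_1,\dots,x_n]/(f_1,\dots,f_n)$; you instead observe directly that $L\otimes_B A$ is free over $A$ and that flatness preserves the embedding $N\hookrightarrow L'$, which is all that is needed.
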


\begin{proof}
By Definition \ref{def:reflexive modules}, since \(\mathcal{F}\) is reflexive and coherent on \(V\), for every affine open \(V'=\operatorname{Spec}A'\subseteq V\) the module \(\Gamma(V',\mathcal{F})\) is a reflexive \(A'\)-module; moreover, by [Har.Ch.~II, Prop.~5.8]\cite{har1}, \(f^{*}\mathcal{F}\) is coherent on \(U\). Hence we may reduce to the affine case: for an \'etale morphism \(\operatorname{Spec}A\to \operatorname{Spec}B\) (with \(A,B\) Noetherian satisfying \(G_{1}+S_{2}\)), and a finitely generated reflexive \(B\)-module \(M\), we must show that the \(A\)-module \(M\otimes_{B}A\) is reflexive.

By Lemma \ref{lem:reflexive exact sequence}, there is a short exact sequence
\[
0 \longrightarrow M \longrightarrow L \longrightarrow N \longrightarrow 0
\]
with \(L\) free and \(N\) a submodule of a free module. Since \'etale morphisms are flat, \(A\) is a flat \(B\)-module, and we obtain a short exact sequence
\[
0 \longrightarrow M\otimes_{B}A \longrightarrow L\otimes_{B}A \longrightarrow N\otimes_{B}A \longrightarrow 0.
\]
By [SP, Tag 00U9], an \'etale ring map \(B\to A\) admits a presentation
\[
A \;\cong\; B[x_{1},\dots,x_{n}]/(f_{1},\dots,f_{n}).
\]
As \(L\) is free (hence flat), we have monomorphisms
\[
0 \;\longrightarrow\; M\otimes_{B}A \;\hookrightarrow\; L\otimes_{B}A \;\hookrightarrow\; L\otimes_{B} B[x_{1},\dots,x_{n}],
\]
and \(L\otimes_{B} B[x_{1},\dots,x_{n}]\) is a free \(B\)-module. Therefore there exists a short exact sequence
\[
0 \longrightarrow M\otimes_{B}A \longrightarrow L\otimes_{B} B[x_{1},\dots,x_{n}] \longrightarrow N' \longrightarrow 0
\]
with \(N'\) a submodule of a free module. By Lemma \ref{lem:reflexive exact sequence} again, \(M\otimes_{B}A\) is reflexive.\\
\end{proof}

\begin{proposition}
Let \(U\) and \(V\) be Noetherian schemes satisfying \(G_{1}\) and \(S_{2}\), and let \(f\colon U\to V\) be an \'etale morphism. If the \(\mathcal{O}_{V}\)-module \(\mathcal{F}\) is locally free of rank \(1\) at the generic points of \(V\), then \(f^{*}\mathcal{F}\) is locally free of rank \(1\) at the generic points of \(U\).
\end{proposition}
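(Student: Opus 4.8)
The plan is to argue pointwise at the generic points of \(U\), reducing everything to the behaviour of stalks under étale base change; this is lighter than the proof of Proposition \ref{prop:reflexive coherent Ou-module}, since the structural characterisation of reflexive modules is not needed here.

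The first step is to check that \(f\) carries each generic point of \(U\) to a generic point of \(V\). Let \(\eta\) be a generic point of \(U\), so that \(\dim\mathcal{O}_{U,\eta}=0\), and set \(\xi:=f(\eta)\). Because \(f\) is étale it is flat, and its fibre over \(\xi\) is étale over the residue field \(\kappa(\xi)\), hence a finite disjoint union of spectra of finite separable field extensions; in particular that fibre is zero-dimensional. The dimension formula for flat local homomorphisms used in the proof of Theorem \ref{etale local properties for Gr and Sr} then gives
\[
\dim\mathcal{O}_{U,\eta}=\dim\mathcal{O}_{V,\xi}+\dim\bigl(\mathcal{O}_{U,\eta}/\mathfrak{m}_{\xi}\mathcal{O}_{U,\eta}\bigr)=\dim\mathcal{O}_{V,\xi},
\]
so \(\dim\mathcal{O}_{V,\xi}=0\) and \(\xi\) is a generic point of \(V\). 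Consequently the hypothesis applies at \(\xi\): the stalk \(\mathcal{F}_{\xi}\) is a free \(\mathcal{O}_{V,\xi}\)-module of rank \(1\).

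The second step is the stalk computation. By the formula for stalks of a pullback used earlier (see \cite{Gor}),
\[
(f^{*}\mathcal{F})_{\eta}\;\cong\;\mathcal{O}_{U,\eta}\otimes_{\mathcal{O}_{V,\xi}}\mathcal{F}_{\xi}\;\cong\;\mathcal{O}_{U,\eta}\otimes_{\mathcal{O}_{V,\xi}}\mathcal{O}_{V,\xi}\;\cong\;\mathcal{O}_{U,\eta},
\]
which is free of rank \(1\). As \(\eta\) ranges over the generic points of \(U\), this shows \(f^{*}\mathcal{F}\) is locally free of rank \(1\) at every generic point of \(U\). If instead one reads ``locally free of rank \(1\) at a point'' as ``free of rank \(1\) on some Zariski-open neighbourhood of that point'', then after the first step one picks an open \(W\subseteq V\) containing \(\xi\) with \(\mathcal{F}|_{W}\) free of rank \(1\); then \(f^{-1}(W)\) is an open neighbourhood of \(\eta\) and \((f^{*}\mathcal{F})|_{f^{-1}(W)}\cong f^{*}(\mathcal{F}|_{W})\) is free of rank \(1\), since pullback commutes with restriction to opens.

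I do not anticipate a real obstacle. The only point needing care is the first step --- that a generic point of \(U\) maps to a generic point of \(V\) --- which relies on the flatness of \(f\) (equivalently, on going-down) together with the zero-dimensionality of the fibres of an étale morphism; both are already packaged into the proof of Theorem \ref{etale local properties for Gr and Sr}. Everything after that is formal: the pullback of a rank-one free module stays free of rank one.
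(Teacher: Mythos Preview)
Your proof is correct and follows essentially the same approach as the paper: first show that generic points of \(U\) map to generic points of \(V\), then apply the stalk formula for pullbacks. The only cosmetic difference is that the paper invokes going-down for flat maps to handle the first step while you use the dimension formula (both are equivalent here), and you add the harmless extra remark covering the alternative reading of ``locally free of rank \(1\) at a point''.
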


\begin{proof}
Let \(x\) be a generic point of \(U\). Choose affine neighborhoods \(\operatorname{Spec}A\) of \(x\) (then \(x\) corresponds to a minimal prime ideal of the ring \(A\) ,see [Har. Prop.2.1]\cite{har2}.)
and \(\operatorname{Spec}B\) of \(f(x)\); the \'etale morphism \(\operatorname{Spec}A\to \operatorname{Spec}B\) corresponds to a flat ring homomorphism \(f^{\sharp}\colon B\to A\). By the going-down theorem, the pullback of a minimal prime of \(A\) is a minimal prime of \(B\); hence \(f(x)\) is a generic point of \(V\). Using the stalk formula:
\[
\bigl(f^{*}\mathcal{F}\bigr)_{x} \;\cong\; \mathcal{O}_{X,x}\otimes_{\mathcal{O}_{Y,f(x)}} \mathcal{F}_{f(x)}
\quad\text{and}\quad
\mathcal{F}_{f(x)} \xrightarrow{\sim} \mathcal{O}_{Y,f(x)},
\]
we obtain
\[
\bigl(f^{*}\mathcal{F}\bigr)_{x} \xrightarrow{\sim}
\mathcal{O}_{X,x}\otimes_{\mathcal{O}_{Y,f(x)}} \mathcal{O}_{Y,f(x)} \xrightarrow{\sim} \mathcal{O}_{X,x}.
\]
Thus \(f^{*}\mathcal{F}\) is locally free of rank \(1\) at \(x\), as claimed.
\end{proof}

From the preceding propositions, we see that for any DMH stack \(\mathcal{X}\) there exist reflexive coherent sheaves \(\mathcal{F}\) on \(\mathcal{X}\) which, on every \'etale \(\mathcal{X}\)-scheme \(U\), are locally free of rank \(1\) at the generic points of \(U\). To analyze the local structure of such \(\mathcal{F}\), we first recall fractional ideals for the total quotient sheaf on schemes.

Let \(X\) be a Noetherian scheme satisfying \(S_{1}\), and let \(\mathcal{K}_{X}\) be its total quotient sheaf. A \emph{fractional ideal} of \(X\) is a subsheaf \(\mathcal{I}\subseteq \mathcal{K}_{X}\) which is a coherent \(\mathcal{O}_{X}\)-module. We call \(\mathcal{I}\) \emph{nondegenerate} if, for every generic point \(\eta\) of \(X\), we have \(\mathcal{I}_{\eta}=\mathcal{K}_{X,\eta}\).

If \(f\in \mathcal{K}_{X}(X)\) is a nonzerodivisor, then the subsheaf \((f)\) generated by \(f\) is, as an \(\mathcal{O}_{X}\)-module, a nondegenerate fractional ideal; we call it a \emph{principal ideal}. A fractional ideal \(\mathcal{I}\) is \emph{locally principal} if there exists an open cover \(\{U\}\) of \(X\) such that, for each \(U\), there is a nonzerodivisor \(f\in \mathcal{K}_{X}(U)\) with \((f)|_{U}=\mathcal{I}|_{U}\).

If \(\mathcal{I}\) is a fractional ideal, its inverse \(\mathcal{I}^{-1}\) is defined by setting, for every open \(U\),
\[
\mathcal{I}^{-1}(U)=\{\,f\in \mathcal{K}_{X}(U)\mid f\cdot \mathcal{I}|_{U}\subseteq \mathcal{O}_{U}\,\}.
\]

\begin{lemma}\label{lem:nondegenerate fractional ideal}
Let \(\mathcal{I}\) be a nondegenerate fractional ideal. Then:
\begin{enumerate}\item[(a)]
\(\mathcal{I}^{-1}\) is also a nondegenerate fractional ideal.
\item[(b)]
There is a natural isomorphism of \(\mathcal{O}_{X}\)-modules \(\mathcal{I}^{-1}\cong \mathcal{I}^{\vee}\).
\end{enumerate}
\end{lemma}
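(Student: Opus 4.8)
The plan is to prove (b) first and then read off most of (a) from it. For (b), I would introduce the natural morphism of $\mathcal O_X$-modules $\phi\colon \mathcal I^{-1}\to \mathcal I^{\vee}=\mathscr{H}\!om_{\mathcal O_X}(\mathcal I,\mathcal O_X)$ sending a local section $f\in\mathcal I^{-1}(U)$ to the homomorphism ``multiplication by $f$'' $\mathcal I|_U\to\mathcal O_U$, which is well defined precisely because $f\cdot\mathcal I|_U\subseteq\mathcal O_U$. Since being an isomorphism is local on $X$, I would reduce to an affine open $U=\Spec A$, writing $\mathcal I|_U=\widetilde I$ with $I\subseteq K:=A_{\mathrm{tot}}$ a finitely generated $A$-submodule. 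The first thing to nail down is that nondegeneracy of $\mathcal I$ is equivalent to $I$ containing a nonzerodivisor $g$ of $A$: this uses Lemma~\ref{lem:conclusions of Kx}(b) together with the fact that at a generic point $\eta$ the ring $\mathcal O_{X,\eta}$ is Artinian local, hence equal to its own total quotient ring, so nondegeneracy says $I\otimes_A K\xrightarrow{\ \sim\ }K$ via multiplication, the image of $g\in I$ being a unit of $K$.

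With such a $g$ in hand, injectivity of $\phi$ is immediate: if multiplication by $f$ annihilates $\mathcal I|_U$, then in particular $fg=0$ in $K$ with $g$ a unit, so $f=0$. For surjectivity, given $\psi\colon I\to A$ I would tensor with $K$ to obtain a $K$-linear map $\psi_K\colon I\otimes_A K\to K$; transporting it along $I\otimes_A K\cong K$ identifies $\psi_K$ with multiplication by $f:=\psi_K(1)\in K$, and evaluating $\psi_K$ on the image of $I$ gives $fa=\psi(a)\in A$ for every $a\in I$, so $f\in I^{-1}$ and $\phi(f)=\psi$. All of these local identifications are given by the same canonical formula, hence compatible with restriction, so they glue to the asserted isomorphism $\mathcal I^{-1}\cong\mathcal I^{\vee}$ of $\mathcal O_X$-modules.

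For (a), $\mathcal I^{-1}$ is by construction a subsheaf of $\mathcal K_X$ and visibly an $\mathcal O_X$-submodule, so only coherence and nondegeneracy require argument, and both follow cleanly from (b). Coherence: $\mathcal I^{\vee}=\mathscr{H}\!om_{\mathcal O_X}(\mathcal I,\mathcal O_X)$ is coherent because $\mathcal I$ and $\mathcal O_X$ are coherent and $X$ is Noetherian, and $\mathcal I^{-1}\cong\mathcal I^{\vee}$. (Alternatively, and without invoking (b): over $U=\Spec A$ one has $I^{-1}\cdot g\subseteq A$ since $g\in I$, hence $I^{-1}\subseteq g^{-1}A\cong A$, a Noetherian module, so $I^{-1}$ is finitely generated.) Nondegeneracy: at a generic point $\eta$ we have $\mathcal I_\eta=\mathcal K_{X,\eta}=\mathcal O_{X,\eta}$ by nondegeneracy of $\mathcal I$ (again via Lemma~\ref{lem:conclusions of Kx}(b) and the Artinian local ring $\mathcal O_{X,\eta}$ being its own total quotient ring), and since $\mathcal I$ is coherent the formation of $\mathcal I^{\vee}$ commutes with localization, so $(\mathcal I^{-1})_\eta\cong(\mathcal I^{\vee})_\eta=\Hom_{\mathcal O_{X,\eta}}(\mathcal O_{X,\eta},\mathcal O_{X,\eta})=\mathcal O_{X,\eta}=\mathcal K_{X,\eta}$. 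Hence $\mathcal I^{-1}$ is a nondegenerate fractional ideal.

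The step I expect to demand the most care is the affine reduction in (b): precisely, verifying that nondegeneracy of the sheaf $\mathcal I$ really does translate into $I$ containing a nonzerodivisor (equivalently $I\otimes_A K\cong K$), and that every $A$-linear $\psi\colon I\to A$ is multiplication by an element of $K$. Once that normalization is in place, injectivity, surjectivity, coherence, and nondegeneracy all drop out quickly, and the passage from the local isomorphisms to a global one is routine sheaf-theoretic gluing.
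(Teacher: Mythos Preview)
The paper does not supply its own proof of this lemma; it simply refers the reader to [Har, Lem.~2.2]\cite{har2}. Your argument is correct and is precisely the standard proof one finds there: define the multiplication map \(\mathcal I^{-1}\to\mathcal I^{\vee}\), check affine-locally that it is an isomorphism using that a nondegenerate \(I\subseteq K\) contains a unit of \(K\) (this is where \(S_1\) and prime avoidance over the finitely many minimal primes enter, exactly the step you flag as delicate), and then read off coherence and nondegeneracy of \(\mathcal I^{-1}\) from~(b).
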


\noindent See [Har. Lem.~2.2]\cite{har2} for the proof.

Now let \(\mathcal{X}\) be a DMH stack and \(\mathcal{F}\) a reflexive coherent \(\mathcal{O}_{\mathcal{X}}\)-module which is locally free of rank \(1\) at the generic points on every \'etale \(\mathcal{X}\)-scheme. For any \'etale \(\mathcal{X}\)-scheme \(U\), the restriction \(\mathcal{F}|_{U_{\mathrm{Zar}}}\) is a reflexive nondegenerate fractional ideal of \(\mathcal{K}_{U_{\mathrm{Zar}}}\).

Indeed, consider the natural inclusion
\[
\mathcal{F}|_{U_{\mathrm{Zar}}}\;\hookrightarrow\; \mathcal{F}|_{U_{\mathrm{Zar}}}\otimes \mathcal{K}_{U_{\mathrm{Zar}}}.
\]
Since \(\mathcal{F}|_{U_{\mathrm{Zar}}}\) is locally free of rank \(1\) at the generic points of \(U\), Lemma \ref{lem:conclusions of Kx}(c) yields an isomorphism
\(\mathcal{F}|_{U_{\mathrm{Zar}}}\otimes \mathcal{K}_{U_{\mathrm{Zar}}}\cong \mathcal{K}_{U_{\mathrm{Zar}}}\). Hence \(\mathcal{F}|_{U_{\mathrm{Zar}}}\) is a reflexive nondegenerate fractional ideal of \(\mathcal{K}_{U_{\mathrm{Zar}}}\).

By Lemma \ref{lem:nondegenerate fractional ideal}, the dual of $\mathcal{F}|_{U_{\mathrm{Zar}}}$ on \(U_{\mathrm{Zar}}\),
\[
\mathcal{F}|_{U_{\mathrm{Zar}}}^{\vee}=\mathscr{H}\!om_{\mathcal{O}_{U_{\mathrm{Zar}}}}\!\bigl(\mathcal{F}|_{U_{\mathrm{Zar}}},\mathcal{O}_{U_{\mathrm{Zar}}}\bigr),
\]
is again a nondegenerate fractional ideal. We define the dual on the stack by
\[
\mathcal{F}^{\vee}=\mathscr{H}\!om_{\mathcal{O}_{\mathcal{X}}}(\mathcal{F},\mathcal{O}_{\mathcal{X}}).
\]
By the compatibility of coherent sheaves under \'etale pullback (Definition \ref{def:quasi-coherent}(2)), for every \'etale \(\mathcal{X}\)-scheme \(U\) we have
\(\mathcal{F}^{\vee}(U)=\mathcal{F}|_{U_{\mathrm{Zar}}}^{\vee}(U)\). The next proposition shows that \(\mathcal{F}^{\vee}\) is coherent; therefore \(\mathcal{F}^{\vee}\) is again a reflexive coherent \(\mathcal{O}_{\mathcal{X}}\)-module.\\

\begin{proposition}\label{prop:isomorphism in zariski topo}
Let \(\mathcal{X}\) be a DMH stack. Given an \'etale morphism \(f\colon U \to V\) of \'etale \(\mathcal{X}\)-schemes and a coherent sheaf \(\mathcal{F}\) on \(V\), there is a natural isomorphism
\[
\left.f^{*}\!\left(\left.\mathcal{F}^{\vee}\right|_{V_{\mathrm{Zar}}}\right) \xrightarrow{\ \sim\ } \mathcal{F}^{\vee}\right|_{U_{\mathrm{Zar}}}.
\]
Equivalently,
\[
f^{*}\!\left(\mathscr{H}om_{\mathcal{O}_{V_{\mathrm{Zar}}}}\!\left(\mathcal{F}|_{V_{\mathrm{Zar}}}, \mathcal{O}_{V_{\mathrm{Zar}}}\right)\right)
\xrightarrow{\ \sim\ }
\mathscr{H}om_{\mathcal{O}_{U_{\mathrm{Zar}}}}\!\left(\mathcal{F}|_{U_{\mathrm{Zar}}}, \mathcal{O}_{U_{\mathrm{Zar}}}\right).
\]
\end{proposition}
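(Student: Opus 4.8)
The plan is to reduce the statement to a single piece of commutative algebra — compatibility of the dual of a finitely presented module with flat base change — and then to sheafify. First I would note that the asserted arrow is the natural comparison morphism
\[
f^{*}\mathscr{H}om_{\mathcal{O}_{V_{\mathrm{Zar}}}}\!\bigl(\mathcal{F}|_{V_{\mathrm{Zar}}},\mathcal{O}_{V_{\mathrm{Zar}}}\bigr)\longrightarrow \mathscr{H}om_{\mathcal{O}_{U_{\mathrm{Zar}}}}\!\bigl(f^{*}(\mathcal{F}|_{V_{\mathrm{Zar}}}),\mathcal{O}_{U_{\mathrm{Zar}}}\bigr),
\]
post-composed with the isomorphism $f^{*}(\mathcal{F}|_{V_{\mathrm{Zar}}})\cong \mathcal{F}|_{U_{\mathrm{Zar}}}$ coming from Definition~\ref{def:quasi-coherent}(2). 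Being the localization of a natural transformation, it is compatible with restriction to Zariski opens of $U$ and of $V$, so it suffices to treat the affine case. Write $V=\operatorname{Spec}B$, $U=\operatorname{Spec}A$ with $f^{\sharp}\colon B\to A$ étale, hence flat and of finite presentation; since $\mathcal{X}$ is a DMH stack, $B$ is Noetherian, so $M:=\Gamma(V,\mathcal{F})$ is a finitely generated — thus finitely presented — $B$-module, and $\mathcal{F}|_{U_{\mathrm{Zar}}}=\widetilde{M\otimes_{B}A}$ (coherence being preserved under étale pullback, cf. [Har. Ch.~II, Prop.~5.8]\cite{har1}). Under the identifications $\mathscr{H}om_{\mathcal{O}_{V_{\mathrm{Zar}}}}(\widetilde{M},\mathcal{O}_{V_{\mathrm{Zar}}})=\widetilde{\operatorname{Hom}_{B}(M,B)}$ and $f^{*}\widetilde{N}=\widetilde{N\otimes_{B}A}$, the proposition becomes the claim that the natural map $\operatorname{Hom}_{B}(M,B)\otimes_{B}A\to \operatorname{Hom}_{A}(M\otimes_{B}A,A)$ is an isomorphism.

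For this I would choose a finite free presentation $B^{m}\xrightarrow{\ \psi\ }B^{n}\to M\to 0$, available since $M$ is finitely presented over the Noetherian ring $B$. Applying the contravariant functor $\operatorname{Hom}_{B}(-,B)$ gives a left-exact sequence $0\to \operatorname{Hom}_{B}(M,B)\to B^{n}\xrightarrow{\ \psi^{t}\ }B^{m}$, and tensoring with the flat $B$-algebra $A$ preserves this exactness, yielding $0\to \operatorname{Hom}_{B}(M,B)\otimes_{B}A\to A^{n}\xrightarrow{\ \psi^{t}\otimes 1\ }A^{m}$. On the other hand, tensoring the presentation with $A$ (right exactness of $\otimes$) gives a finite free presentation $A^{m}\xrightarrow{\ \psi\otimes 1\ }A^{n}\to M\otimes_{B}A\to 0$; applying $\operatorname{Hom}_{A}(-,A)$ then yields $0\to \operatorname{Hom}_{A}(M\otimes_{B}A,A)\to A^{n}\xrightarrow{\ (\psi\otimes 1)^{t}\ }A^{m}$. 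Since transposition of a matrix between finite free modules commutes with the base change $B\to A$, both four-term sequences have the same right-hand arrow $\psi^{t}\otimes 1$, and the natural comparison map is exactly the morphism induced on kernels; hence it is an isomorphism. Globalizing over affine covers of $U$ and $V$, these affine isomorphisms are the restrictions of the natural map of sheaves and therefore glue to the desired isomorphism $f^{*}(\mathcal{F}^{\vee}|_{V_{\mathrm{Zar}}})\xrightarrow{\sim}\mathcal{F}^{\vee}|_{U_{\mathrm{Zar}}}$.

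I do not expect a genuine obstacle here: the argument uses only flatness of $f$ (to keep the left-exact sequence produced by $\operatorname{Hom}(-,B)$ exact after base change) and finite presentation of $\mathcal{F}$ (to run the presentation argument and to know $\mathscr{H}om$ of a coherent sheaf into $\mathcal{O}$ is computed by $\widetilde{\operatorname{Hom}}$), both guaranteed by the hypotheses. The only points that require care are the bookkeeping that the natural map really is the one induced on kernels — i.e.\ that the transpose commutes with base change and that the comparison morphism is compatible with the chosen presentation, a direct check on generators $\phi\otimes a\mapsto\bigl(x\otimes a'\mapsto \phi(x)\otimes aa'\bigr)$ — and the passage from the affine statement back to the sheaf statement on $U_{\mathrm{Zar}}$ and $V_{\mathrm{Zar}}$. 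Combined with the fact that $\mathscr{H}om$ of a coherent sheaf into $\mathcal{O}$ is coherent on a Noetherian scheme, this proposition shows that $\mathcal{F}^{\vee}=\mathscr{H}om_{\mathcal{O}_{\mathcal{X}}}(\mathcal{F},\mathcal{O}_{\mathcal{X}})$ satisfies the étale-compatibility condition of Definition~\ref{def:quasi-coherent}(2) and is therefore a coherent $\mathcal{O}_{\mathcal{X}}$-module.
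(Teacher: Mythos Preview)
Your proof is correct. Both you and the paper reduce to the commutative-algebra fact that $\operatorname{Hom}_{B}(M,B)\otimes_{B}A\to\operatorname{Hom}_{A}(M\otimes_{B}A,A)$ is an isomorphism for $M$ finitely presented and $B\to A$ flat, but you arrive there by different localizations and justify the key step differently. The paper checks the statement stalk by stalk: at $x\in U$ it writes both sides as $\operatorname{Hom}$'s over the local rings $\mathcal{O}_{U,x}$ and $\mathcal{O}_{V,f(x)}$, then invokes (without proof) the natural isomorphism $\mathcal{O}_{U,x}\otimes_{\mathcal{O}_{V,f(x)}}\operatorname{Hom}_{\mathcal{O}_{V,f(x)}}(N,\mathcal{O}_{V,f(x)})\cong\operatorname{Hom}_{\mathcal{O}_{V,f(x)}}(N,\mathcal{O}_{U,x})$ and finishes with the tensor--Hom adjunction. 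You instead pass to affine opens and give an explicit, self-contained proof of the base-change isomorphism via a finite free presentation of $M$ and a kernel comparison. Your route is slightly more elementary and makes the role of flatness and finite presentation completely transparent; the paper's stalk argument is shorter but leans on the reader knowing why $\operatorname{Hom}$ out of a finitely presented module commutes with flat extension. Either way, the DMH hypothesis is used only to ensure the schemes are Noetherian, so that coherent implies finitely presented.
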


\begin{proof}
It suffices to show that the stalks at an arbitrary point \(x\in U\) are isomorphic. By [Görtz Prop.~7.27]\cite{Gor},
\[
\bigl(\mathscr{H}om_{\mathcal{O}_{U}}(\mathcal{F}|_{U}, \mathcal{O}_{U})\bigr)_{x}
=\operatorname{Hom}_{\mathcal{O}_{U,x}}\bigl((\mathcal{F}|_{U})_{x}, \mathcal{O}_{U,x}\bigr)
\]
\[
\bigl(f^{*}\mathscr{H}om_{\mathcal{O}_{V}}(\mathcal{F}|_{V}, \mathcal{O}_{V})\bigr)_{x}
=\mathcal{O}_{U,x}\otimes_{\mathcal{O}_{V,f(x)}}
\operatorname{Hom}_{\mathcal{O}_{V,f(x)}}\bigl((\mathcal{F}|_{V})_{f(x)}, \mathcal{O}_{V,f(x)}\bigr)
\]
Since \(\mathcal{F}\) is coherent, we have \((\mathcal{F}|_{U})_{x}\xrightarrow{\ \sim\ }
\mathcal{O}_{U,x}\otimes_{\mathcal{O}_{V,f(x)}}(\mathcal{F}|_{V})_{f(x)}\), hence
\[
\operatorname{Hom}_{\mathcal{O}_{U,x}}\bigl((\mathcal{F}|_{U})_{x}, \mathcal{O}_{U,x}\bigr)
\xrightarrow{\ \sim\ }
\operatorname{Hom}_{\mathcal{O}_{U,x}}\bigl(\mathcal{O}_{U,x}\otimes_{\mathcal{O}_{V,f(x)}}(\mathcal{F}|_{V})_{f(x)}, \mathcal{O}_{U,x}\bigr)
\]
Because \(f\) is \'etale and \(U,V\) are Noetherian we have a natural isomorphism
\[
\mathcal{O}_{U,x}\otimes_{\mathcal{O}_{V,f(x)}}
\operatorname{Hom}_{\mathcal{O}_{V,f(x)}}\bigl((\mathcal{F}|_{V})_{f(x)}, \mathcal{O}_{V,f(x)}\bigr)
\xrightarrow{\ \sim\ }
\operatorname{Hom}_{\mathcal{O}_{V,f(x)}}\bigl((\mathcal{F}|_{V})_{f(x)},
\mathcal{O}_{V,f(x)}\otimes_{\mathcal{O}_{V,f(x)}}\mathcal{O}_{U,x}\bigr),
\]
that is, the left-hand side identifies with
\(\operatorname{Hom}_{\mathcal{O}_{V,f(x)}}\bigl((\mathcal{F}|_{V})_{f(x)}, \mathcal{O}_{U,x}\bigr)\).
It remains to use the standard tensor–Hom adjunction to obtain
\[
\operatorname{Hom}_{\mathcal{O}_{U,x}}\!\bigl(\mathcal{O}_{U,x}\otimes_{\mathcal{O}_{V,f(x)}}(\mathcal{F}|_{V})_{f(x)}, \mathcal{O}_{U,x}\bigr)
\xrightarrow{\ \sim\ }
\operatorname{Hom}_{\mathcal{O}_{V,f(x)}}\!\bigl((\mathcal{F}|_{V})_{f(x)}, \mathcal{O}_{U,x}\bigr).
\]
(Indeed, for a \(B\)-module \(C\) and an \(A\)-module \(E\) with a ring map \(B\to A\), there is a natural isomorphism
\(\operatorname{Hom}_{A}(C\otimes_{B}A, E)\cong \operatorname{Hom}_{B}\bigl(C, \operatorname{Hom}_{A}(A,E)\bigr)\); taking \(E=A\) gives the desired formula.)
\(\Box\)
\end{proof}
\medskip

We now give the definition of generalized divisors.

\begin{definition}
Let \(\mathcal{X}\) be a DMH stack. A \emph{generalized divisor} on \(\mathcal{X}\) is a reflexive coherent \(\mathcal{O}_{\mathcal{X}}\)-module \(\mathcal{I}\) for which there exists a reflexive coherent \(\mathcal{O}_{\mathcal{X}}\)-module \(\mathcal{F}\), locally free of rank \(1\) at the generic points on every \'etale \(\mathcal{X}\)-scheme \(U\), together with an isomorphism \(\mathcal{F}^{\vee}\xrightarrow{\ \sim\ }\mathcal{I}\). If moreover \(\mathcal{I}\subseteq \mathcal{O}_{\mathcal{X}}\) (meaning that for every \'etale \(\mathcal{X}\)-scheme \(U\) we have \(\mathcal{I}|_{U_{\mathrm{Zar}}}\subseteq \mathcal{O}_{\mathcal{X}}|_{U_{\mathrm{Zar}}}\)), we call \(\mathcal{I}\) \emph{effective}.
\end{definition}

By Propositions \ref{prop:reflexive coherent Ou-module}–\ref{prop:isomorphism in zariski topo} and the surrounding discussion, this definition is reasonable. When the DMH stack \(\mathcal{X}\) is a Noetherian scheme satisfying \(G_{1}+S_{2}\), the above notion coincides with Hartshorne's definition of generalized divisors on schemes.\\

Next, following [Q.Liu]\cite{Liu}, we recall the notion of embedded points on schemes, and we show that an effective generalized divisor on a DMH stack \(\mathcal{X}\) determines a unique divisor on \(\mathcal{X}\) without embedded points (see Definition \ref{def:divisors on DM stacks} for divisors on stacks).

\begin{definition}
Let \(A\) be a Noetherian ring and \(M\) an \(A\)-module. For \(x\in M\), set
\(\operatorname{Ann}(x):=\{a\in A\mid ax=0\}\) (the annihilator of \(x\)). A prime ideal \(\mathfrak{p}\subset A\) is an \emph{associated prime ideal} of \(M\) if \(\mathfrak{p}=\operatorname{Ann}(x)\) for some \(0\ne x\in M\). The set of associated primes is denoted \(\operatorname{Ass}_{A}(M)\) (or simply \(\operatorname{Ass}(M)\)).
\end{definition}

\begin{definition}
Let \(X\) be a locally Noetherian scheme, and set
\[
\operatorname{Ass}(\mathcal{O}_{X})
:=\{\,x\in X \mid \mathfrak{m}_{x}\in \operatorname{Ass}_{\mathcal{O}_{X,x}}(\mathcal{O}_{X,x})\,\}.
\]
Points in \(\operatorname{Ass}(\mathcal{O}_{X})\) are called the \emph{associated points} of \(X\). For any open \(U\subset X\), we have \(\operatorname{Ass}(\mathcal{O}_{X})\cap U=\operatorname{Ass}(\mathcal{O}_{U})\). If \(X\) is affine, then \(\operatorname{Ass}(\mathcal{O}_{X})=\operatorname{Ass}(\mathcal{O}_{X}(X))\). Every generic point of \(X\) is associated; associated points that are not generic are called the \emph{embedded points} of \(X\).\\
\end{definition}

\begin{theorem}
Let \(\mathcal{X}\) be a DMH stack. An effective generalized divisor \(\mathcal{I}\) on \(\mathcal{X}\) determines a unique divisor \(\mathcal{Z}\) on \(\mathcal{X}\) (see Definition \ref{def:divisors on DM stacks}) without embedded points. Here ``without embedded points'' means that there exists an \'etale presentation \(U\to \mathcal{X}\) such that \(\mathcal{Z}\times_{\mathcal{X}} U\), viewed as a closed subscheme of \(U\), has no embedded points.
\end{theorem}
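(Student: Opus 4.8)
The plan is to realize $\mathcal{Z}$ as the closed substack cut out by the ideal sheaf $\mathcal{I}$, and then to verify, on one well-chosen étale presentation, that this closed substack is a divisor without embedded points, importing the scheme-level statement from \cite{har2}.

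\emph{Step 1 (constructing $\mathcal{Z}$).} First I would use effectivity: $\mathcal{I}$ is then a coherent $\mathcal{O}_{\mathcal{X}}$-submodule of $\mathcal{O}_{\mathcal{X}}$, i.e. a quasi-coherent sheaf of ideals. On each étale $\mathcal{X}$-scheme $U$ the restriction $\mathcal{I}|_{U_{\mathrm{Zar}}}\subseteq\mathcal{O}_{U_{\mathrm{Zar}}}$ is a coherent ideal, defining a closed subscheme $Z_{U}\hookrightarrow U$. For an étale morphism $g\colon U'\to U$ of étale $\mathcal{X}$-schemes, the étale compatibility of Definition \ref{def:quasi-coherent}(2) gives an isomorphism $g^{*}(\mathcal{I}|_{U_{\mathrm{Zar}}})\xrightarrow{\sim}\mathcal{I}|_{U'_{\mathrm{Zar}}}$; since $g$ is flat, the source is the ideal of $Z_{U}\times_{U}U'$, so $Z_{U'}=Z_{U}\times_{U}U'$. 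Hence $\{Z_{U}\}$ descends to a closed substack $\mathcal{Z}\subseteq\mathcal{X}$ with ideal sheaf $\mathcal{I}$ and $\mathcal{Z}\times_{\mathcal{X}}U=Z_{U}$ for every $U$.

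\emph{Step 2 ($\mathcal{Z}$ is a divisor without embedded points).} Next I would choose, using the definition of a DMH stack, an étale presentation $p\colon U\to\mathcal{X}$ with $U$ Noetherian and $G_{1}+S_{2}$. Writing $\mathcal{I}\cong\mathcal{F}^{\vee}$ with $\mathcal{F}$ reflexive coherent, locally free of rank one at generic points, the discussion preceding the definition of generalized divisor (via Lemma \ref{lem:conclusions of Kx}(c), Lemma \ref{lem:nondegenerate fractional ideal}, Proposition \ref{prop:reflexive coherent Ou-module}, and Proposition \ref{prop:isomorphism in zariski topo}) shows that $\mathcal{I}|_{U_{\mathrm{Zar}}}$ is a reflexive nondegenerate fractional ideal of $\mathcal{K}_{U_{\mathrm{Zar}}}$, and effectivity gives $\mathcal{I}|_{U_{\mathrm{Zar}}}\subseteq\mathcal{O}_{U_{\mathrm{Zar}}}$. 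So $\mathcal{I}|_{U_{\mathrm{Zar}}}$ is the ideal sheaf of an effective generalized divisor on the scheme $U$; by Hartshorne's correspondence on $G_{1}+S_{2}$ schemes \cite{har2}, such ideal sheaves are exactly those of closed subschemes of $U$ that are pure of codimension one and have no embedded points. Thus $Z_{U}=\mathcal{Z}\times_{\mathcal{X}}U$ is such a subscheme; $\mathcal{Z}$ is a divisor in the sense of Definition \ref{def:divisors on DM stacks}, and $p$ witnesses the absence of embedded points.

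\emph{Step 3 (uniqueness) and the main obstacle.} Since a closed substack is determined by its quasi-coherent ideal sheaf, $\mathcal{Z}$ is the unique divisor on $\mathcal{X}$ with ideal sheaf $\mathcal{I}$, independent of the choices of $\mathcal{F}$ and of $p$; if one wants a bijection, the converse — that an effective divisor on $\mathcal{X}$ without embedded points has a reflexive nondegenerate fractional ideal as its ideal sheaf on each étale chart — follows the same way. The step I expect to require the most care is the gluing in Step 1: compatibility of the $Z_{U}$ under étale base change rests on combining Definition \ref{def:quasi-coherent}(2) with flatness of étale morphisms to identify $\mathcal{I}|_{U'_{\mathrm{Zar}}}$ with the pulled-back ideal of $Z_{U}$. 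Thereafter the ``divisor'' and ``no embedded points'' claims are local on $U$ and are inherited from the scheme case; the only subtlety there is that neither the reflexive-fractional-ideal computation nor the notion of embedded point distinguishes the Zariski from the étale topology on $U$, which is exactly what Lemma \ref{lem:conclusions of Kx} and Proposition \ref{prop:isomorphism in zariski topo} (with the étale-local behaviour of associated points, cf. \cite{Liu}) ensure.
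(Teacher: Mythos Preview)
Your proposal is correct and follows essentially the same route as the paper: restrict $\mathcal{I}$ to each \'etale chart to get an effective generalized divisor in Hartshorne's sense, invoke \cite[Prop.~2.4]{har2} to obtain the codimension-one closed subscheme without embedded points, and use the coherence of $\mathcal{I}$ (Definition \ref{def:quasi-coherent}(2)) together with flatness/\'etale descent to glue these to a closed substack $\mathcal{Z}$. Your write-up is considerably more detailed than the paper's three-sentence proof---in particular you spell out why $g^{*}(\mathcal{I}|_{U_{\mathrm{Zar}}})$ really is the ideal of $Z_{U}\times_{U}U'$ and why uniqueness holds---but the architecture and the key input are the same.
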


\noindent\textbf{Proof.}
By [Har. Prop.~2.4]\cite{har2}, for every \'etale \(\mathcal{X}\)-scheme \(U\) the restriction \(\mathcal{I}|_{U_{\mathrm{Zar}}}\) determines a unique closed subscheme of \(U\) of codimension \(1\) without embedded points. The coherence of \(\mathcal{I}\) on the stack implies compatibility under \'etale morphisms; by \'etale descent, these closed subschemes glue to a unique closed substack \(\mathcal{Z}\subset \mathcal{X}\). Since each fiber has codimension \(1\), \(\mathcal{Z}\) is a divisor on \(\mathcal{X}\). \(\Box\)

\begin{definition}
Let \(\mathcal{X}\) be a DMH stack, and let \(\mathcal{I}_{1},\mathcal{I}_{2}\) be generalized divisors on \(\mathcal{X}\). We say that \(\mathcal{I}_{1}\) and \(\mathcal{I}_{2}\) are \emph{linearly equivalent}, written \(\mathcal{I}_{1}\sim \mathcal{I}_{2}\), if their corresponding reflexive coherent sheaves \(\mathcal{F}_{1}\) and \(\mathcal{F}_{2}\) are isomorphic.
\end{definition}

The next proposition (Prop.\ref{linear system}) describes linear systems on DMH stacks, extending [Har. Prop.~2.9]\cite{har2}.

\begin{definition}
Let \(\mathcal{X}\) be a DMH stack, and let \(\mathcal{I}\) be the generalized divisor defined by a reflexive coherent \(\mathcal{O}_{\mathcal{X}}\)-module \(\mathcal{F}\). A global section \(s\in \Gamma(\mathcal{X},\mathcal{F})\) is called \emph{nondegenerate} if, for every \'etale \(\mathcal{X}\)-scheme \(U\), the restriction \(s|_{U}\) generates the stalk of \(\mathcal{F}\) at each generic point \(\eta\in U\). (For global sections on DM stacks, see [Alper p.~163]\cite{Alper}.)
\end{definition}

\begin{proposition}\label{linear system}
Let \(\mathcal{X}\) be a DMH stack, and let \(\mathcal{I}\) be the generalized divisor defined by a reflexive coherent \(\mathcal{O}_{\mathcal{X}}\)-module \(\mathcal{F}\). Then nondegenerate global sections
\(s\in \Gamma(\mathcal{X},\mathcal{F})\) modulo global units \(\Gamma(\mathcal{X},\mathcal{O}_{\mathcal{X}}^{*})\) are in bijection with effective divisors \(\mathcal{I}'\sim \mathcal{I}\).
\end{proposition}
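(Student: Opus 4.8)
The plan is to define explicit assignments in both directions and show they are mutually inverse, reducing every verification to the Zariski site of an étale $\mathcal{X}$-scheme $U$, where the scheme-level statements of [Har]\cite{har2} and the results of Section~3 apply. Throughout write $\mathcal{I}=\mathcal{F}^{\vee}$; recall from the discussion following Proposition~\ref{prop:isomorphism in zariski topo} that $\mathcal{F}|_{U_{\mathrm{Zar}}}$ is a reflexive nondegenerate fractional ideal of $\mathcal{K}_{U_{\mathrm{Zar}}}$ and $\mathcal{I}|_{U_{\mathrm{Zar}}}=(\mathcal{F}|_{U_{\mathrm{Zar}}})^{\vee}$.

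First I would pass from a nondegenerate section to an effective divisor. Given nondegenerate $s\in\Gamma(\mathcal{X},\mathcal{F})$, let $\varphi_{s}\colon\mathcal{O}_{\mathcal{X}}\to\mathcal{F}$ be the morphism $1\mapsto s$ and dualize to $\varphi_{s}^{\vee}\colon\mathcal{I}=\mathcal{F}^{\vee}\to\mathcal{O}_{\mathcal{X}}^{\vee}=\mathcal{O}_{\mathcal{X}}$. I would first check $\varphi_{s}^{\vee}$ is injective: this can be tested after restriction to each $U_{\mathrm{Zar}}$, and there $\mathcal{I}|_{U_{\mathrm{Zar}}}$, being a dual sheaf, embeds in a finite free $\mathcal{O}_{U}$-module, so (using that $U$ satisfies $S_{1}$) its associated points, hence those of $\ker(\varphi_{s}^{\vee}|_{U_{\mathrm{Zar}}})$, are generic points of $U$; and nondegeneracy of $s$ makes $\varphi_{s}$, hence $\varphi_{s}^{\vee}$, an isomorphism at every generic point of $U$, so the kernel vanishes. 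Then $\mathcal{I}'_{s}:=\operatorname{im}(\varphi_{s}^{\vee})\subseteq\mathcal{O}_{\mathcal{X}}$ is a coherent subsheaf isomorphic to $\mathcal{F}^{\vee}$; it is therefore reflexive, $(\mathcal{I}'_{s})^{\vee}\cong\mathcal{F}^{\vee\vee}=\mathcal{F}$ is reflexive and locally free of rank one at the generic points of every chart, so $\mathcal{I}'_{s}$ is an effective generalized divisor with $\mathcal{I}'_{s}\sim\mathcal{I}$. Since replacing $s$ by $us$ with $u\in\Gamma(\mathcal{X},\mathcal{O}_{\mathcal{X}}^{*})$ composes $\varphi_{s}^{\vee}$ with multiplication by $u$ on $\mathcal{O}_{\mathcal{X}}$ and leaves the image unchanged, $s\mapsto\mathcal{I}'_{s}$ factors through the orbits modulo global units.

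Conversely, given an effective $\mathcal{I}'$ with $\mathcal{I}'\sim\mathcal{I}$, I would take the inclusion $\iota\colon\mathcal{I}'\hookrightarrow\mathcal{O}_{\mathcal{X}}$, set $\mathcal{F}':=(\mathcal{I}')^{\vee}$ (so $\mathcal{F}'\cong\mathcal{F}$ by linear equivalence), dualize $\iota$ to $\iota^{\vee}\colon\mathcal{O}_{\mathcal{X}}=\mathcal{O}_{\mathcal{X}}^{\vee}\to\mathcal{F}'$, and compose with a chosen isomorphism $\mathcal{F}'\xrightarrow{\ \sim\ }\mathcal{F}$ to obtain a section $s_{\mathcal{I}'}\in\Gamma(\mathcal{X},\mathcal{F})$. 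On each chart $\mathcal{I}'$ is nondegenerate as a fractional ideal — at a generic point $\eta$ the local ring is Artinian, hence its own total quotient ring by Lemma~\ref{lem:conclusions of Kx}(b), so $\mathcal{I}'_{\eta}=\mathcal{O}_{U,\eta}$ — whence $\iota$ and $\iota^{\vee}$ are isomorphisms at $\eta$ and $s_{\mathcal{I}'}$ generates $\mathcal{F}$ there; so $s_{\mathcal{I}'}$ is nondegenerate. Using that double dualizing a morphism of reflexive coherent sheaves returns that morphism — so $\iota^{\vee\vee}=\iota$ under $(\mathcal{I}')^{\vee\vee}=\mathcal{I}'$ and $\mathcal{O}_{\mathcal{X}}^{\vee\vee}=\mathcal{O}_{\mathcal{X}}$ — an unwinding shows the two assignments compose to the identity in either order, up to the ambiguity in the isomorphism $\mathcal{F}'\cong\mathcal{F}$; that is, $s_{\mathcal{I}'}$, and the section recovered from $\mathcal{I}'_{s}$, are each well defined only modulo $\operatorname{Aut}_{\mathcal{O}_{\mathcal{X}}}(\mathcal{F})$. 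At this stage one has a bijection between nondegenerate global sections of $\mathcal{F}$ modulo $\operatorname{Aut}_{\mathcal{O}_{\mathcal{X}}}(\mathcal{F})$ and effective generalized divisors linearly equivalent to $\mathcal{I}$.

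The remaining point, which I expect to be the main obstacle, is the identification $\operatorname{Aut}_{\mathcal{O}_{\mathcal{X}}}(\mathcal{F})=\Gamma(\mathcal{X},\mathcal{O}_{\mathcal{X}}^{*})$, equivalently $\mathscr{H}\!om_{\mathcal{O}_{\mathcal{X}}}(\mathcal{F},\mathcal{F})\cong\mathcal{O}_{\mathcal{X}}$ compatibly with the inclusion $\mathcal{O}_{\mathcal{X}}\hookrightarrow\mathscr{H}\!om_{\mathcal{O}_{\mathcal{X}}}(\mathcal{F},\mathcal{F})$ by scalar multiplication. On a chart, this internal Hom identifies with the fractional ideal $(\mathcal{F}|_{U_{\mathrm{Zar}}}:\mathcal{F}|_{U_{\mathrm{Zar}}})\subseteq\mathcal{K}_{U_{\mathrm{Zar}}}$, which contains $\mathcal{O}_{U_{\mathrm{Zar}}}$ and is $S_{2}$, hence is determined by its localizations in codimension $\le 1$; in codimension $0$ it equals $\mathcal{O}_{U}$ because $\mathcal{F}$ is free of rank one at the generic points, so the whole question concentrates at the codimension-one points of $U$, where $G_{1}$ makes $\mathcal{O}_{U,x}$ a one-dimensional Gorenstein local ring and one needs $\mathcal{F}_{x}$ to be free over it. This is automatic when $\mathcal{X}$ is regular in codimension one (in particular when it is normal), and then $\mathscr{H}\!om_{\mathcal{O}_{\mathcal{X}}}(\mathcal{F},\mathcal{F})=\mathcal{O}_{\mathcal{X}}$ and the stated bijection follows. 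In general, though, a reflexive module of rank one over a one-dimensional Gorenstein local ring need not be free, so absent such an extra hypothesis one obtains only the weaker bijection above, with $\operatorname{Aut}_{\mathcal{O}_{\mathcal{X}}}(\mathcal{F})$ in place of $\Gamma(\mathcal{X},\mathcal{O}_{\mathcal{X}}^{*})$; closing this gap is the delicate step.
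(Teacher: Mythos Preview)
Your two constructions (dualize $\mathcal{O}_{\mathcal{X}}\xrightarrow{s}\mathcal{F}$ in one direction, dualize $\mathcal{I}'\hookrightarrow\mathcal{O}_{\mathcal{X}}$ in the other) are exactly the paper's; the paper's proof is in fact much terser than yours and simply writes these down, citing [Har, Prop.~2.9], without checking that the maps are mutually inverse or discussing the quotient.

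The gap you isolate is real, and the paper does not address it. The natural equivalence on sections is indeed by $\operatorname{Aut}_{\mathcal{O}_{\mathcal{X}}}(\mathcal{F})$, and this can be strictly larger than $\Gamma(\mathcal{X},\mathcal{O}_{\mathcal{X}}^{*})$. Your cusp heuristic is already a counterexample: take $A=k[[t^{2},t^{3}]]$ (a one-dimensional Gorenstein local domain, so $\operatorname{Spec}A$ is a DMH stack), $\mathcal{F}=\mathfrak{m}^{-1}=k[[t]]$, $\mathcal{I}=\mathcal{F}^{\vee}=\mathfrak{m}=t^{2}k[[t]]$. Then $\mathscr{H}\!om_{A}(\mathcal{F},\mathcal{F})=(k[[t]]:k[[t]])=k[[t]]\supsetneq A$. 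The sections $1$ and $1+t$ of $\mathcal{F}$ are nondegenerate and yield the same effective divisor, since $(1+t)\mathfrak{m}=(1+t)t^{2}k[[t]]=t^{2}k[[t]]=\mathfrak{m}$; but $1+t\notin A$, so they are not related by an element of $A^{*}=\Gamma(\mathcal{O}_{X}^{*})$. Hence the assignment $s\mapsto\mathcal{I}'_{s}$ is not injective modulo global units, and the proposition as stated fails.

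So your proposal is correct and more careful than the paper's own argument: you recover exactly what the paper proves (the two mutually inverse constructions) and correctly flag that the passage from $\operatorname{Aut}_{\mathcal{O}_{\mathcal{X}}}(\mathcal{F})$ to $\Gamma(\mathcal{X},\mathcal{O}_{\mathcal{X}}^{*})$ is unjustified in the stated generality. The statement should either carry an extra hypothesis (e.g.\ $\mathcal{F}$ locally free in codimension $\le 1$, as holds when $\mathcal{X}$ is normal) or be weakened to quotient by $\operatorname{Aut}_{\mathcal{O}_{\mathcal{X}}}(\mathcal{F})$.
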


\noindent\textbf{Proof.}
We follow the same idea as in [Har. Prop.~2.9]\cite{har2}. Given a nondegenerate section \(s\in \Gamma(\mathcal{X},\mathcal{F})\), we obtain an injection \(\mathcal{O}_{\mathcal{X}}\xrightarrow{s}\mathcal{F}\). Dualizing yields an injection \(\mathcal{F}^{\vee}\to \mathcal{O}_{\mathcal{X}}\); here \(\mathcal{F}^{\vee}=\mathcal{I}'\) is an effective generalized divisor, and by construction we obtain \(\mathcal{I}'\sim \mathcal{I}\). Conversely, for any effective divisor \(\mathcal{I}'\sim \mathcal{I}\) we have \(\mathcal{I}'\subseteq \mathcal{O}_{\mathcal{X}}\). Dualizing gives \(\mathcal{O}_{\mathcal{X}}\subseteq \mathcal{I}'^{-1}\), and since \(\mathcal{I}'\sim \mathcal{I}\) we have \(\mathcal{I}'^{-1}\cong \mathcal{F}\). Therefore the image of \(1\in \Gamma(\mathcal{X},\mathcal{O}_{\mathcal{X}})\) is a nondegenerate section of \(\mathcal{F}\). \(\Box\)

\newpage



\end{document}